\title{Bibliography management: \texttt{biblatex} package}
\author{Share\LaTeX}
\date{ }
\newtheorem{theorem}{Theorem}[section]
\newtheorem{proposition}[theorem]{Proposition}
\newtheorem{lemma}[theorem]{Lemma}
\numberwithin{equation}{section}
\theoremstyle{definition}
\newtheorem{remark}[theorem]{Remark}
\newcommand{\R}{\operatorname{\mathbb{R}}}
\newcommand{\C}{\operatorname{\mathbb{C}}}
\newcommand{\Z}{\operatorname{\mathbb{Z}}}
\newcommand{\re}{\operatorname{Re}}
\newcommand{\im}{\operatorname{Im}}
\begin{document}
\begin{center}
\vspace{5em}
{\Large\textsc{Bounding the integral of the argument of the Riemann Zeta function}}\\
\vspace{1em}
{\large\textsc{Victor Simon Gerard Amberger}}
\end{center}
\begin{abstract}
	\textsc{Abstract.} This article improves the estimate of $|S_1(t_2)-S_1(t_1)|$, which is the definite integral of the argument of the Riemann zeta-function between $t_1$ and $t_2$. Estimates of this quantity are needed to apply Turing's method to compute the exact number of zeta zeros up to a given height.
\end{abstract}

{\textsc{Keywords:}} Turing's Method, Argument of Zeta function, Zeros of Zeta function, Integral of Argument of Zeta function

MSC Classification: 11M26


\section{Introduction}\label{sec1}
The distribution of the non-trivial zeros of the Riemann zeta function is of great importance to many estimates related to the Riemann zeta function. It is conjectured by the Riemann Hypothesis that all zeros lie on the critical line and the conjecture has been proven up to a height of $3\cdot10^{12}$ \cite{Platt-Trudgian}.
One way to count the number of zeros of the Riemann zeta function inside critical strip is to first count zeros on the critical line and then verify that no zeros have been skipped. One method to find a lower bound for the number of zeros on the critical line is to search for sign changes in the Riemann-Siegel $Z$ function at Gram points. The Riemann-Siegel $Z$ function is given by
\begin{equation}\label{Riemann Siegal Z}
    Z(t)=e^{i\theta(t)}\zeta\left(\frac{1}{2}+it\right),
\end{equation}
where
\begin{equation}
    \theta(t)=\im\log\Gamma\left(\frac{1}{4}+\frac{it}{2}\right)-\frac{t\log\pi}{2}
\end{equation}
(see page 89 of \cite{titchmarsh}), and the $n$-th Gram point is defined by $\theta(g_n)=\pi n$ \cite[pp. 125-126]{edwards}. Because \eqref{Riemann Siegal Z} defines a real function we can deduce that $\zeta\left(\frac{1}{2}+ig_n\right)$ is real for all Gram points. More specifically,
\begin{equation*}
    \zeta\left(\frac{1}{2}+ig_n\right)=(-1)^nZ(g_n),
\end{equation*}
thus if we find consecutive Gram points where the sign of $\re\zeta\left(\frac{1}{2}+ig_n\right)$ does not change, then we located an interval $[g_n,g_{n+1})$ where $Z(t)$ has an odd number of zero crossings. Gram's law observes that $\zeta\left(\frac{1}{2}+ig_n\right)$ is usually positive and that each Gram interval usually contains exactly one zeta zero \cite{hutchinson}. This observation is closely linked to the behavior of the argument of the Riemann zeta function, given by
\begin{equation*}
    \label{Arg zeta}
    S(t):=\frac{1}{\pi}\Delta_C\arg\zeta(s)
\end{equation*}
when $t$ is not the imaginary part of a zeta zero. Otherwise, define $S(t)$ by the principal value. Here, $\Delta_C\arg$ denotes the change of the argument along the path $C$, which consists of the lines that connect $2$ and $2+it$ as well as $2+it$ and $\frac{1}{2}+it$. The argument of the zeta function is closely related to the number of zeros in the critical strip, a connection that is made explicit by the following relation \cite[eq. 3.1]{Brent},
\begin{equation}\label{N(t), S(t), connection}
    N(t)=1+\frac{1}{\pi}\theta(t)+S(t).
\end{equation}
where $N(t)$ counts the number of zeros inside the critical strip up to a given height, formally defined by
\begin{equation*}
    \begin{split}
        N(t)=\frac{1}{2}\#\left\{0<\beta<1,\ 0<\gamma<t\ \mid\ \zeta(\beta+i\gamma)=0\right\}\\
        +\frac{1}{2}\#\left\{0<\beta<1,\ 0<\gamma\leq t\ \mid\ \zeta(\beta+i\gamma)=0\right\}
    \end{split}
\end{equation*}
counted with multiplicity. Observe that \eqref{N(t), S(t), connection} implies that Gram's law is equivalent to $|S(t)|<1$. However, this assertion has to fail eventually due to a result of Selberg \cite{Selberg} which states that
\begin{equation*}
    S(t)=\Omega_{\pm}\left(\frac{(\log t)^\frac{1}{3}}{(\log\log t)^\frac{7}{3}}\right).
\end{equation*}
Indeed, the first Gram point at which Gram's law fails and where $\zeta\left(\frac{1}{2}+ig_n\right)<0$ is $n=126$ \cite{hutchinson}. A modified version of Gram's law is Rosser's rule which considers Gram blocks instead of Gram intervals. A Gram block of length $l$ is an interval $[g_n,g_{n+l})$ where $\zeta\left(\frac{1}{2}+ig_{n+j}\right)>0$ for $j=n,n+l$ and $\zeta\left(\frac{1}{2}+ig_{n+j}\right)\leq0$ for $j\in(n,n+j)$ \cite[\S5]{Lehman}. This may be understood as a smoothed version of the Gram interval. And Rosser's rule states that each Gram block of length $l$ usually contains $l$ zeta zeros. This holds true if and only if $|S(t)|<2$ for $t\in[g_n,g_{n+l})$.
So, this method is easier to apply than Gram's law, but ultimately it is doomed to fail all the same. Another method for obtaining an upper bound on $N(t)$ and test the list of zeros obtained from counting zero crossings of $Z(t)$ for completeness is Turing's method. This relies not on bounding $S(t)$ but the average $S_1(t)$ instead. The idea is to exploit the fact that $S(t)$ is a highly oscillating function that satisfies
\begin{equation*}
    S_1(t):=\int_0^tS(u)du=O(\log t),
\end{equation*}
due to Littlewood \cite[pp. 221-222]{titchmarsh}. Turing uses an explicit version of this bound to show the following result. We refer to the work of Lehman who fixed some mistakes in Turing's argument. Based on this work, Brent proved the following theorem.
\begin{theorem}[Brent-Lehman{\cite[Theorem 3.2]{Brent}}]
    \label{brent}
    Let $a$ and $b$ be positive real constants that satisfy
    \begin{equation*}
        |S_1(t_2)-S_1(t_1)|\leq a+b\log t_2
    \end{equation*}
    for all $t_2>t_1\geq t_0$. Furthermore, let  $K$ consecutive Gram Blocks with union $[g_n,g_p)$ satisfy Rosser's rule, such that $g_n\geq t_0$ and
    \begin{equation*}
        K\geq\frac{b}{6\pi}\log^2 g_p+\frac{a-b\log2\pi}{6\pi}\log g_p,
    \end{equation*}
    then
    \begin{equation*}
        N(g_n)\leq n+1\ \text{ and }\ N(g_p)\geq p+1.
    \end{equation*}
\end{theorem}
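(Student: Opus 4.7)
I would first reduce both conclusions to inequalities on $S$ itself. Since $\theta(g_k)=\pi k$ by the definition of Gram points, \eqref{N(t), S(t), connection} gives $N(g_k)=1+k+S(g_k)$ at every Gram point, so the target inequalities $N(g_n)\leq n+1$ and $N(g_p)\geq p+1$ are equivalent to $S(g_n)\leq 0$ and $S(g_p)\geq 0$ respectively. Because $N$ is integer-valued away from zeta-zeros, a violation of either can be sharpened to $S(g_n)\geq 1$ or $S(g_p)\leq -1$, which is the form in which I would seek a contradiction.

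Next, I would use the Rosser hypothesis on the $K$ blocks covering $[g_n,g_p)$ to propagate either hypothetical violation. Rosser's rule verified on a Gram block $[g_k,g_{k+l})$ of length $l$ forces $N(g_{k+l})-N(g_k)\geq l$, hence $S(g_{k+l})\geq S(g_k)$; thus $S$ is non-decreasing along the sequence of block endpoints in $[g_n,g_p]$. In particular, $S(g_n)\geq 1$ implies $S(g_k)\geq 1$ at every block endpoint, and $S(g_p)\leq -1$ implies $S(g_k)\leq -1$ at every block endpoint. By the symmetry of the situation I would treat the first case and deduce the second by an analogous calculation.

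The heart of the argument is converting these pointwise bounds at block endpoints into an integral lower bound for $S_1(g_p)-S_1(g_n)$. Inside a single Rosser-verified block, the representation
\begin{equation*}
S(t)=S(g_k)+\#\{\text{zeros in }(g_k,t]\}-\frac{\theta(t)-\theta(g_k)}{\pi}
\end{equation*}
allows one to write $\int_{g_k}^{g_{k+l}} S(t)\,dt$ as an explicit function of $S(g_k)$, the Gram sub-interval lengths, and the positions of the zeros (constrained by Rosser). Taking the worst-case zero configuration and summing over all $K$ blocks, then invoking the standard asymptotic $g_{k+1}-g_k\sim 2\pi/\log(g_k/2\pi)$ for the Gram spacing, yields a lower bound of the form $S_1(g_p)-S_1(g_n)\geq cK/\log(g_p/2\pi)$ with an explicit $c>0$. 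Comparing against the hypothesis $|S_1(g_p)-S_1(g_n)|\leq a+b\log g_p$ and solving for $K$ produces precisely the quadratic-in-$\log g_p$ threshold in the statement.

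The main obstacle is extracting the correct numerical constant, namely the factor $\tfrac{1}{6\pi}$ in front of $\log^2 g_p$: a crude one-zero-per-block estimate yields only something like $\tfrac{1}{2\pi}$, so the sharper factor requires careful worst-case accounting both of how zeros may cluster within long Rosser blocks and of the correction terms coming from $\theta'(t)\sim\tfrac{1}{2}\log(t/2\pi)$ in the Gram-spacing asymptotic. Getting this balance right, rather than the overall structure of the argument, is where the real bookkeeping lies.
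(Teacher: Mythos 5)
There is a fundamental problem before any details: the statement you are proving is the one this paper exists to correct. The paper never proves Theorem \ref{brent}; it cites it and asserts that Lehman's argument contains an error making the lower bound on $K$ too small by a factor of $2$ --- the denominators $6\pi$ should be $3\pi$, as in the corrected Theorem \ref{theorem abc loglog t}. Your overall strategy (derive a contradiction between an integral lower bound coming from Rosser-verified blocks and the upper bound supplied by the $S_1$ hypothesis) is exactly the right one, but executed correctly it proves only the weaker threshold. Concretely, a hypothetical violation of either conclusion forces
\begin{equation*}
\int_{g_n}^{g_p}S(t)\,\theta'(t)\,dt\;\geq\;\pi\Bigl(K+\tfrac{1}{2}(p-n)\Bigr)\;\geq\;\tfrac{3\pi}{2}K,
\end{equation*}
while Lehman's Lemma 16 (Lemma \ref{lemma queer bitch}) bounds the left-hand side by $\tfrac12\bigl(\log\tfrac{g_p}{2\pi}+\tfrac{0.048\pi^2}{g_p^2}\bigr)\max_t|S_1(g_p)-S_1(t)|\leq\tfrac12\bigl(\log\tfrac{g_p}{2\pi}+\tfrac{0.048\pi^2}{g_p^2}\bigr)(a+b\log g_p)$; the contradiction therefore requires $K>\tfrac{1}{3\pi}\log\tfrac{g_p}{2\pi}\,(a+b\log g_p)$, i.e.\ twice what the statement demands. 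You explicitly defer ``the real bookkeeping'' that produces the numerical constant; that bookkeeping is precisely where the factor of $2$ lives, and no worst-case accounting of zero positions or Gram spacings will recover $\tfrac{1}{6\pi}$ by this route.

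Two further gaps in the plan itself. First, you sharpen a violation only to $S(g_n)\geq1$; you need $S(g_n)\geq2$, which comes from parity: since $\zeta(\tfrac12)<0$ and Rosser's rule forces $\zeta(\tfrac12+ig_n)>0$, one has $N(g_n)=n+1+2q$ with $q\in\Z$, so a violation means $q\geq1$. With only $S(g_n)\geq1$ the per-block contribution to $\tfrac1\pi\int S\,\theta'\,dt$ degenerates from $1+\tfrac{k}{2}$ to $1-\tfrac{k}{2}$, which is nonpositive for blocks of length $k\geq2$, and the argument collapses. Second, bounding the unweighted integral $\int S\,dt$ using the asymptotic $g_{k+1}-g_k\sim2\pi/\log(g_k/2\pi)$ cannot produce an explicit inequality valid for all $g_p\geq t_0$: the device that makes the per-block computation exact is to weight by $\theta'(t)$, so that the substitution $x=\theta(t)/\pi$ maps each Gram interval onto a unit interval, and then to convert $\int S\,\theta'\,dt$ back into $S_1$ information via Lemma \ref{lemma queer bitch}. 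Note also that this conversion involves $\max_{y\leq t\leq x}|S_1(2\pi x)-S_1(2\pi t)|$ over the whole range, not merely the endpoint difference $S_1(g_p)-S_1(g_n)$ your plan compares against --- this is why the hypothesis is stated for all $t_2>t_1\geq t_0$ and must be used in that strength.
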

Regrettably, however, there is a minor mistake in Lehman's argument and the due to that the lower bound on $K$ is off by a factor of $2$. This error is also present in Brent's and Trudgian's estimates for $K$.\\
We will present a corrected version of this theorem in section \ref{sec3}. Section \ref{section estimate S_1} contains an improved estimate of $|S_1(t_2)-S_1(t_1)|$ and section \ref{section Tables} contains tables with additional constants for this estimate, as well as comparisons to previous estimates. The main improvement in this paper stems from the use of contributions of the real part of the second logarithmic derivative of the zeta function to bound the integral of its logarithm, rather than only using contributions of the logarithmic derivative. In addition, further, smaller improvements result from the use of a better estimate of the logarithmic derivatives of the zeta function in the right half plane with $\re(s)>1$ (see remark \ref{tatort time}) and the use of newer, improved bounds for the zeta function on both the $1/2$- and the $1$-line, due to Hiary, Patel, and Yang \cite{Hiary-Patel-Yang}, and Qingyi, and Teo \cite{Qingyi-Teo} respectively.
\section{Results}\label{sec2}
The main result of this paper is an improved estimate of $S_1(t)$.
\begin{theorem}\label{S_1 estimates theorem}
    For $t_2>t_1\geq 653$
    \begin{equation*}
        |S_1(t_2)-S_1(t_1)|\leq a+b\log\log t_2+c\log t_2,
    \end{equation*}
    with $(a,b,c)=(1.680,0.186,0.0314)$.
\end{theorem}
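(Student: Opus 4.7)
The plan is to start from Littlewood's identity, which expresses $\pi S_1(T)$ --- up to an explicit bounded function of $T$ --- as the vertical integral
\[
J(T) := \int_{1/2}^{\infty} \log|\zeta(\sigma + iT)|\, d\sigma.
\]
Taking the difference at $T=t_1$ and $T=t_2$ reduces the task to bounding $|J(t_1)| + |J(t_2)|$ from above. I would split each such integral at some auxiliary $\sigma_0>1$ into a left piece on $[1/2,\sigma_0]$ and a right piece on $[\sigma_0,\infty)$, each handled by a different technique.

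On $[\sigma_0,\infty)$ the Dirichlet series for $\log\zeta$ converges absolutely. Here I would use the sharpened right half-plane estimate alluded to in remark \ref{tatort time}, which yields a smaller constant than the naive bound $|\log\zeta(\sigma+iT)|\le \log\zeta(\sigma)$ and contributes only an $O(1)$ term that can be absorbed into $a$. On $[1/2,\sigma_0]$, pointwise bounds on $|\zeta(\sigma+iT)|$ are obtained by interpolating via the convexity (Phragmén--Lindelöf) principle between the Hiary--Patel--Yang bound on the $1/2$-line and the Qingyi--Teo bound on the $1$-line. The key novelty enters here: rather than bounding $\log|\zeta|$ by integrating $\re(\log\zeta)'$ from $\sigma_0$ down to $1/2$, I would work with the \emph{second} logarithmic derivative, whose real part is essentially a sum over zeros in a small disk and is therefore sign-controllable. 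A double integration in $\sigma$ (with integration by parts and careful handling of the boundary term at $\sigma=1/2$) then produces an expression for $\int_{1/2}^{\sigma_0}\log|\zeta(\sigma+iT)|\,d\sigma$ with an additional layer of cancellation, yielding sharper constants than the classical approach.

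Combining the two pieces, one arrives at a bound of the shape $a + b\log\log t_2 + c\log t_2$: the $c\log t_2$ term comes from the exponent in the convexity bound across $[1/2,\sigma_0]$, the $b\log\log t_2$ from the subleading logarithmic factors in the Hiary--Patel--Yang estimate, and the constant $a$ collects the right-tail contribution, the Littlewood boundary terms, and assorted residual constants. The hypothesis $t_1\ge 653$ is dictated by the range of validity of the input zeta bounds. I expect the chief obstacle to be the numerical bookkeeping: the splitting point $\sigma_0$ and the convexity interpolation parameters must be simultaneously optimized to balance all three contributions and arrive at exactly $(a,b,c)=(1.680,0.186,0.0314)$. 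Handling the second-logarithmic-derivative term cleanly --- in particular, controlling the contribution of zeros in the relevant disks without overcounting, and propagating this through the double integration --- should be the most delicate part of the argument and is where the quantitative improvement over prior work is extracted.
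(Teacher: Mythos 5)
Your reduction via Littlewood's lemma and your treatment of the upper bound for $J(T)=\re\int_{1/2+iT}^{\infty+iT}\log\zeta(s)\,ds$ (convexity interpolation between the Hiary--Patel--Yang and Qingyi--Teo line bounds, plus the absolutely convergent tail) match the paper's Lemmas \ref{switch from vertical to horizontal lines} and \ref{upper bound}. The genuine gap is on the other side: since you must bound $|J(t_1)|+|J(t_2)|$, you also need to bound $J(T)$ from \emph{below}, i.e.\ control how negative $\int_{1/2}^{\sigma_0}\log|\zeta(\sigma+iT)|\,d\sigma$ can be when $T$ is near an ordinate of a zero, and convexity upper bounds say nothing about this. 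Your proposed mechanism --- double integration by parts in $\sigma$ with ``careful handling of the boundary term at $\sigma=1/2$'' --- does not survive scrutiny: the boundary terms involve $\log|\zeta(\tfrac12+iT)|$ and $\re\frac{\zeta'}{\zeta}(\tfrac12+iT)$, which blow up at zeros on or near the critical line, so there is no uniform pointwise control at that edge.

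The paper's route to the lower bound is different and is where the quantitative content lies. One writes $-J(T)\le\re\int_{1/2+iT}^{1/2+d+iT}\log\frac{\zeta(s+d)}{\zeta(s)}\,ds+I(d)$ for a parameter $d\in[0.606,1]$, expands the ratio by the Hadamard product over zeros, and proves a per-zero integral inequality (Lemma \ref{key lemma}, itself a Phragm\'en--Lindel\"of argument for a harmonic function of the zero's position) showing that each zero contributes at most $d^2q_1\re\frac{1}{d+\omega}+d^3q_2\re\frac{1}{(d+\omega)^2}$ with $q_1=\log 2+\tfrac12$, $q_2=\log 2-\tfrac12$. Summing over zeros converts these into $\re\frac{\zeta'}{\zeta}$ and $\re\bigl(\frac{\zeta''}{\zeta}-(\frac{\zeta'}{\zeta})^2\bigr)$ evaluated at $\tfrac12+d+iT$, where the real part exceeds $1$ and the Euler product gives explicit bounds; the $\log t$ growth of the zero sum is then nearly cancelled by the $\Gamma$-factor term, leaving $c_2=\frac{d^2(q_1-1)}{2}$. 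This also corrects your accounting of the main term: at $t'=10^{12}$ the convexity exponent contributes only $\frac{1}{48\pi}\approx0.0066$ to $c$, while the dominant $\approx0.0248$ comes from this lower bound. Without the product expansion, the per-zero inequality, and the evaluation of the logarithmic derivatives at abscissa $\tfrac12+d>1$, the plan cannot produce the stated constants.
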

\begin{remark}
    The choice of constants $a,b,c$ is made to be optimal around the height of $10^{12}$. Further admissible choices for $(a,b,c)$ are given in table \eqref{table: e}.
\end{remark}
Estimates of this kind can be used to prove results about the distribution of non-trivial zeta zeros. One application is Turing's method to verify that a given list of non-trivial zeta zeros is complete. Lagarias \cite{lagarias} uses results of this kind to give explicit bounds on the horizontal gap between consecutive zeros. Although these bounds do not picture the asymptotic spacing between consecutive zeta zeros, they give a tighter bound on the gaps between zeros at relatively low heights.
In addition to these estimates, we correct a mistake in Theorem \ref{brent} and show a corrected version that works with $S_1(t)$ estimates of the form $a+b\log\log t+c\log t$.
\begin{theorem}\label{theorem abc loglog t}
    Let $a$, $b$ and $c$ be positive real constants that satisfy
    \begin{equation*}
        |S_1(t_2)-S_1(t_1)|\leq a+b\log\log t_2 +c\log t_2
    \end{equation*}
    for all $t_2>t_1\geq t_0$. Furthermore, let  $K$ consecutive Gram Blocks with union $[g_n,g_p)$ satisfy Rosser's rule, such that $g_n\geq t_0$ and
    \begin{equation*}
        K>\frac{1}{3\pi}\left(\log\frac{g_p}{2\pi}+\frac{0.048\pi^2}{g_p^2}\right)\left(a+b\log\log g_p+c\log g_p\right),
    \end{equation*}
    then
    \begin{equation*}
        N(g_n)\leq n+1\ \text{ and }\ N(g_p)\geq p+1.
    \end{equation*}
\end{theorem}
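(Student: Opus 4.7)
I would follow Turing's method in the form refined by Lehman and Brent, incorporating the factor-of-two correction noted in Section~\ref{sec1}. The plan is to argue both conclusions by contradiction; the two cases are symmetric, and I will focus on $N(g_p)\geq p+1$.

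First, suppose $N(g_p)\leq p$. At the Gram point $g_p$ one has $\theta(g_p)/\pi=p$, so equation~\eqref{N(t), S(t), connection} gives $S(g_p)\leq -1$. Under Rosser's rule the $k$th block contributes exactly $\ell_k$ zeros, so $N(g_p)-N(g_n)=p-n$ exactly. Since $S$ is integer-valued at Gram points and $\theta$ shifts by an integer multiple of $\pi$ between any two block endpoints, this forces $S$ to take the common value $S(g_n)=S(g_p)\leq -1$ at every block endpoint.

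Next, I would establish a Lehman-type lower bound for $\lvert\int_{g_n}^{g_p}S(t)\,dt\rvert$. Starting from $S=N-1-\theta/\pi$, the identity
\begin{equation*}
\int_{g_n}^{g_p} S(t)\,dt = N(g_n)(g_p-g_n) + \sum_{\gamma\in(g_n,g_p]}(g_p-\gamma) - (g_p-g_n) - \frac{1}{\pi}\int_{g_n}^{g_p}\theta(t)\,dt
\end{equation*}
separates the problem into three manageable pieces: I would evaluate the last integral via the Stirling expansion $\theta(t)=\tfrac{t}{2}\log(t/(2\pi))-t/2+\pi/8+O(1/t)$, and bound the middle sum block-by-block using Rosser's localisation of the $\ell_k$ zeros in block $k$ to $[g_{n+h_{k-1}},g_{n+h_k})$. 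Inter-Gram spacings are controlled by the sharp upper bound $2\theta'(t)\leq \log(t/(2\pi))+0.048\pi^2/t^2$ valid for $t\geq t_0$. Organising the worst-case placement of zeros compatible with Rosser's rule should yield
\begin{equation*}
\int_{g_n}^{g_p} S(t)\,dt \leq -\frac{3\pi K}{\log(g_p/(2\pi))+0.048\pi^2/g_p^2}.
\end{equation*}

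Finally, combining this with $\lvert S_1(g_p)-S_1(g_n)\rvert\leq a+b\log\log g_p+c\log g_p$ yields
\begin{equation*}
\frac{3\pi K}{\log(g_p/(2\pi))+0.048\pi^2/g_p^2}\leq a+b\log\log g_p+c\log g_p,
\end{equation*}
i.e.\ $K\leq \tfrac{1}{3\pi}(\log(g_p/(2\pi))+0.048\pi^2/g_p^2)(a+b\log\log g_p+c\log g_p)$, contradicting the hypothesis. The other conclusion $N(g_n)\leq n+1$ follows by the symmetric argument with $S(g_n)=S(g_p)\geq 1$ giving the reversed-sign Lehman lower bound. The hard part will be the second step: extracting the sharp factor $3\pi$—which corrects Brent's erroneous $6\pi$—requires tracking not just the count but the distribution of zeros within each Gram block, combined with the explicit Stirling bound for $\theta'$ that is responsible for the correction term $0.048\pi^2/g_p^2$.
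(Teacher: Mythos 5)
Your architecture differs from the paper's: the paper never touches the unweighted integral $\int_{g_n}^{g_p}S(t)\,dt$ directly, but instead bounds the $\theta'$-weighted integral $\int_{g_n}^{g_p}S(t)\theta'(t)\,dt$ from below by $\pi\bigl(K+\tfrac{1}{2}(p-n)\bigr)\geq\tfrac{3\pi}{2}K$ via the substitution $x=\theta(t)/\pi$ (which turns each Gram interval into a unit interval), and from above by Lehman's Lemma \ref{lemma queer bitch}, which converts $\max_{g_n\leq\tau\leq g_p}|S_1(g_p)-S_1(\tau)|$ into a bound on the weighted integral with the factor $\tfrac{1}{2}\bigl(\log\tfrac{g_p}{2\pi}+\tfrac{0.048\pi^2}{g_p^2}\bigr)$. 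Your plan of working with $S_1(g_p)-S_1(g_n)$ itself and pushing the factor $\log\tfrac{g_p}{2\pi}+\tfrac{0.048\pi^2}{g_p^2}$ into the lower bound is a genuinely different (and in principle viable) route that bypasses Lemma \ref{lemma queer bitch} entirely and only uses the hypothesis at the single pair $(g_n,g_p)$. However, one of the few concrete assertions you make is false: Rosser's rule gives only $N(g_p)-N(g_n)\geq p-n$ (sign changes of $Z$ detect at least $\ell_k$ zeros per block; nothing excludes extra zeros on or off the line), so you cannot conclude $N(g_p)-N(g_n)=p-n$ exactly, nor that $S$ takes a common value at all block endpoints. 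The paper handles this correctly by writing $N(g_p)=p+1+2q'$ (parity from the sign condition at block endpoints) and propagating only one-sided inequalities $S(t)\leq m+j+2+2q'-\theta(t)/\pi$ on each Gram interval.

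The more serious problem is that the central inequality $\int_{g_n}^{g_p}S(t)\,dt\leq-3\pi K\big/\bigl(\log\tfrac{g_p}{2\pi}+\tfrac{0.048\pi^2}{g_p^2}\bigr)$ --- the only place where the theorem's constants enter --- is asserted, not derived, and the two ingredients that make it true are absent from your sketch. First, you need the combinatorial count in the $\theta$-variable: the piecewise bounds above integrate against $\theta'(t)\,dt/\pi$ to give $1+\tfrac{k}{2}$ per block of length $k$, hence $K+\tfrac{1}{2}(p-n)\geq\tfrac{3}{2}K$ in total; this $\tfrac32$, coming from the extra $1$ per block plus $p-n\geq K$, is exactly the factor-of-two correction over Brent--Lehman and does not appear anywhere in your plan. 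Second, to trade the $\theta'$-weight for the single factor $\theta'(g_p)\leq\tfrac{1}{2}\bigl(\log\tfrac{g_p}{2\pi}+\tfrac{0.048\pi^2}{g_p^2}\bigr)$ you must divide a weighted integral by the maximum of the weight, which is only legitimate because the piecewise comparison functions $m+j+2+2q'-\theta(t)/\pi$ are single-signed (here $\leq0$, using $q'\leq-1$) throughout $[g_n,g_p)$; for an oscillating integrand this step fails. Without articulating this sign-definiteness, your ``worst-case placement of zeros'' step has no route to the stated constant, so as written the proof has a gap precisely at the step you yourself identify as the hard part.
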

\begin{remark}
    Substituting $b=0$ and neglecting the very small $O(g_p^{-2})$ error term yields the corrected version of Theorem \ref{brent}.
\end{remark}
\section{Proof of Theorem \ref{theorem abc loglog t}}\label{sec3}
First, we prove Theorem \ref{theorem abc loglog t}. The proof is analogous to Brent's proof of Theorem \ref{brent} and relies on the following lemma due to Lehman. 
\begin{lemma}[{\cite[Lemma 16]{Lehman}}]
    \label{lemma queer bitch}
    Let $t^*\in(0.988,1.012)$ be the local minimum of $\kappa(t)$ for $0<t<\infty$, where
    \begin{equation*}
        \kappa(t)=-\frac{1}{2}t\log\pi +\frac{1}{4\pi i}\log\frac{\Gamma\left(\frac{1}{4}+i\pi t\right)}{\Gamma\left(\frac{1}{4}-i\pi t\right)}=\frac{1}{2\pi}\theta(2\pi t)
    \end{equation*}
    and $\kappa(t^*)=-\frac{9}{16}+\Theta(0.005)$. Then
    \begin{equation*}
        \left|\int_y^xS(2\pi t)\kappa'( t)d\tau\right|\leq\frac{1}{4\pi}\left(\log x+\frac{0.012}{x^2}\right)\max_{y\leq t\leq x}|S_1(2\pi x)-S_1(2\pi t)|
    \end{equation*}
    for $t^*<y<x$.
    Equivalently,
    \begin{equation*}
        \left|\int_{2\pi y}^{2\pi x}S(t)\theta'(t)dt\right|\leq\frac{1}{2}\left(\log x+\frac{0.012}{x^2}\right)\max_{y\leq t\leq x}|S_1(2\pi x)-S_1(2\pi t)|.
    \end{equation*}
\end{lemma}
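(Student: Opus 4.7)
The plan is to carry out a single integration by parts, then exploit the monotonicity and convexity of $\kappa$ on $(t^*,\infty)$ to telescope the resulting boundary term and integral into a single evaluation of $\kappa'$ at $x$.

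First, I would introduce the antiderivative $F(t):=\frac{1}{2\pi}\bigl(S_1(2\pi t)-S_1(2\pi x)\bigr)$, which is absolutely continuous with $F'(t)=S(2\pi t)$ and, by construction, $F(x)=0$. Integration by parts against the $C^2$ function $\kappa'$ gives
\begin{equation*}
    \int_y^x S(2\pi t)\kappa'(t)\,dt = -F(y)\kappa'(y) - \int_y^x F(t)\kappa''(t)\,dt.
\end{equation*}
Writing $M:=\max_{y\le t\le x}\bigl|S_1(2\pi x)-S_1(2\pi t)\bigr|$, we have $2\pi|F(t)|\le M$ on $[y,x]$, so the triangle inequality yields
\begin{equation*}
    \left|\int_y^x S(2\pi t)\kappa'(t)\,dt\right| \le \frac{M}{2\pi}\left(|\kappa'(y)|+\int_y^x|\kappa''(t)|\,dt\right).
\end{equation*}

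The second step is to observe that, for $t>t^*$, $\kappa$ is both increasing and convex. Since $t^*$ is its only critical point on $(0,\infty)$, $\kappa'(t)>0$ on $(t^*,\infty)$; and $\kappa''(t)=2\pi\theta''(2\pi t)>0$ there, as follows from Stirling's expansion of $\theta$. Both $\kappa'(y)$ and $\kappa''$ are therefore non-negative on the interval of integration, and the parenthetical quantity telescopes to
\begin{equation*}
    \kappa'(y)+\bigl(\kappa'(x)-\kappa'(y)\bigr)=\kappa'(x)=\theta'(2\pi x).
\end{equation*}

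The last step is to control $\theta'(2\pi x)$. From Stirling with explicit remainder one has $\theta'(u)=\tfrac12\log(u/2\pi)-1/(48u^2)+O(u^{-4})$, so at $u=2\pi x$ the leading correction is negative and of order $x^{-2}$; carrying the explicit error bound through produces
\begin{equation*}
    \theta'(2\pi x)\le \tfrac12\log x + \frac{0.006}{x^2}
\end{equation*}
uniformly for $x>t^*$. Dividing by $2\pi$ gives the first displayed inequality, and the equivalent form with $\theta'$ and integration from $2\pi y$ to $2\pi x$ then follows from the substitution $u=2\pi t$, which multiplies both sides by $2\pi$. The main technical obstacle is this last step: at $x$ close to $t^*\approx 1$ the Stirling remainder is not yet dominated by its leading term, so making the inequality hold \emph{uniformly} in $x>t^*$ requires either an explicit Stirling remainder estimate or a short numerical check on a bounded interval $[t^*,T]$ patched to the asymptotic tail. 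The rest of the argument is a clean application of integration by parts together with the one-sided monotonicity of $\kappa'$.
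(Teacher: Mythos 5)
Your argument is correct and is essentially the proof of the cited source (Lehman's Lemma 16), which the paper itself does not reproduce: integrate by parts with the antiderivative normalized to vanish at $x$, use positivity of $\kappa'$ and of $\kappa''$ on $(t^*,\infty)$ to collapse $|\kappa'(y)|+\int_y^x|\kappa''|$ to $\kappa'(x)=\theta'(2\pi x)$, and finish with the explicit Stirling bound $\theta'(2\pi x)\le\tfrac12\log x+0.006\,x^{-2}$. The only piece you leave unverified is that last inequality for $x$ near $t^*$, which you correctly flag and which is precisely the explicit-remainder/finite numerical check carried out in Lehman's paper.
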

We may use this lemma to prove Theorem \ref{theorem abc loglog t}.
\begin{proof}[Proof of Theorem \ref{theorem abc loglog t}]
    The function $N(t)$ increases by an odd number in a Gram interval $[g_n,g_{n+1})$ if both $\zeta\left(\frac{1}{2}+ig_n\right)$ and $\zeta\left(\frac{1}{2}+ig_{n+1}\right)$ have the same sign. If they have different signs, then $N(t)$ increases by an even number or stays the same. Because $\zeta\left(\frac{1}{2}\right)<0$ and by assumption $\zeta\left(\frac{1}{2}+ig_n\right)>0$ we obtain that $N(g_n)=n+1+2q$ for some $q\in\Z$. Consider a Gram block $[g_m,g_{m+k})$ such that $n\leq m<m+k\leq p$. This contains at least $k$ zeros of $Z(t)$, hence
    \begin{equation*}
        S(t)\geq m+2q-\frac{1}{\pi}\theta(t)
    \end{equation*}
    for $g_m<t<g_{m+1}$ and if $k>1$ then $Z(t)$ changes sign in each interval $(g_{m+j},g_{m+j+1})$ for all $j\in[1,2,\dots,k-1]$. Hence,
    \begin{equation*}
        S(t)\geq m+j-1+2q-\frac{1}{\pi}\theta(t)
    \end{equation*}
    for $g_{m+j}<t<g_{m+j+1}$. Now, assume that $q\geq1$, then
    \begin{equation*}
        \begin{split}
            \frac{1}{\pi}\int_{g_m}^{g_{m+k}}S(t)\theta'(t)dt\geq&\frac{1}{\pi}\int_{g_m}^{g_{m+1}}\left(m+2-\frac{1}{\pi}\theta(t)\right)\theta'(t)dt\\
            &+\frac{1}{\pi}\sum_{j=1}^{k-1}\int_{g_{m+j}}^{g_{m+j+1}}\left(m+1+j-\frac{1}{\pi}\theta(t)\right)\theta'(t)dt\\
            =&\int_{\frac{1}{\pi}\theta(g_m)}^{\frac{1}{\pi}\theta(g_{m+1})}(m+2-x)dx+\sum_{j=1}^{k-1}\int_{\frac{1}{\pi}\theta(g_{m+j})}^{\frac{1}{\pi}\theta(g_{m+j+1})}(m+1+j-x)dx\\
            =&\int_m^{m+1}(m+2-x)dx+\sum_{j=1}^{k-1}\int_{m+j}^{m+j+1}(m+1+j-x)dx\\
            =&1+\frac{1}{2}k.
        \end{split}
    \end{equation*}
    Combining this estimate for $K$ consecutive Gram blocks yields
    \begin{equation*}
        \int_{g_n}^{g_p}S(t)\theta'(t)dt\geq\pi\left(K+\frac{1}{2}(p-n)\right).
    \end{equation*}
    By assumption $p-n\geq K$, therefore we obtain
    \begin{equation*}
        \int_{g_n}^{g_p}S(t)\theta'(t)dt\geq\frac{3\pi}{2}K
    \end{equation*}
    and by our assumption on $K$ this contradicts lemma \ref{lemma queer bitch}. Thus, we may deduce that $q\leq0$ and $N(g_n)\leq n+1$. Similarly, one may deduce that $N(g_p)=p+1+2q'$ for some $q'\in\Z$. Consider again a Gram block $[g_m,g_{m+k})$ such that $n\leq m<m+k\leq p$. By assumption, this block contains $k$ zeros of $Z(t)$, and hence
    \begin{equation*}
        S(t)\leq p+2q'-\frac{1}{\pi}\theta(t)
    \end{equation*}
    for $g_{m+k-1}<t<g_{m+k}$. Furthermore, we find that if $k>1$ then $Z(t)$ changes sign in each interval $(g_{m+j},g_{m+j+1})$ for all $j\in[0,1,\dots,k-2]$. Hence,
    \begin{equation*}
        S(t)\leq m+j+2+2q'-\frac{1}{\pi}\theta(t)
    \end{equation*}
    for $g_{m+j}<t<g_{m+j+1}$. Now assume that $q'<0$, then
    \begin{equation*}
        \int_{g_m}^{g_p}S(t)\theta'(t)dt\leq-\pi\left(K+\frac{1}{2}(p-n)\right).
    \end{equation*}
    This again yields a contradiction to lemma \ref{lemma queer bitch} and we may conclude that $q'\geq0$\\ and $N(g_p)\geq p+1$.
\end{proof}
\section{Estimates for $S_1(t)$}\label{section estimate S_1}
In this section, we show the improved bounds for $S_1(t)$ presented in Theorem \ref{S_1 estimates theorem}. The proof relies on the following lemmas.
\begin{lemma}\label{switch from vertical to horizontal lines}
    If $t_2>t_1>0$, then
    \begin{equation*}
        \pi\int_{t_1}^{t_2}S(t)dt=\re\int_{\frac{1}{2}+it_2}^{\infty+it_2}\log\zeta(s)ds-\re\int_{\frac{1}{2}+it_1}^{\infty+it_1}\log\zeta(s)ds
    \end{equation*}
\end{lemma}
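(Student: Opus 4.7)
The plan is to represent $\pi S(t)$ as a horizontal integral of $-\zeta'/\zeta$, integrate over $t$, apply Fubini to swap the order of integration, evaluate the resulting inner $t$-integral by a rectangular contour argument, and finally take the outer real part to arrive at the stated $\log\zeta$ identity.

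First I would use that $\log\zeta(s)\to 0$ as $\re s\to\infty$ together with $\arg\zeta(2)=0$ (so that the horizontal-from-$+\infty$ branch of $\log\zeta$ agrees with the continuation along the contour $C$ used to define $S(t)$) to write, for every $t$ that is not the ordinate of a zero,
\[\pi S(t)=\im\log\zeta\bigl(\tfrac{1}{2}+it\bigr)=-\im\int_{1/2}^{\infty}\frac{\zeta'(\sigma+it)}{\zeta(\sigma+it)}\,d\sigma.\]
Integrating over $t\in[t_1,t_2]$ and exchanging the order of integration yields
\[\pi\int_{t_1}^{t_2}S(t)\,dt=-\im\int_{1/2}^{\infty}\int_{t_1}^{t_2}\frac{\zeta'(\sigma+it)}{\zeta(\sigma+it)}\,dt\,d\sigma.\]

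To handle the inner integral, for each fixed $\sigma$ I would apply Cauchy's residue theorem to $\zeta'/\zeta$ on the rectangle with vertices $\sigma+it_1$, $X+it_1$, $X+it_2$, $\sigma+it_2$ and let $X\to\infty$. The right vertical edge contributes $o(1)$ by the Dirichlet-series decay of $\zeta'/\zeta$; the two horizontal edges yield $\log\zeta(\sigma+it_2)-\log\zeta(\sigma+it_1)$ in the horizontal-from-$+\infty$ branch; and the enclosed zeros, namely those $\rho$ with $\re\rho>\sigma$ and $t_1<\im\rho<t_2$, contribute $2\pi i N_\sigma$ (counted with multiplicity). Rearranging,
\[\int_{t_1}^{t_2}\frac{\zeta'(\sigma+it)}{\zeta(\sigma+it)}\,dt=-i\bigl[\log\zeta(\sigma+it_2)-\log\zeta(\sigma+it_1)\bigr]-2\pi N_\sigma.\]
Substituting this back and using $-\im(-iz)=\re z$, the real term $-2\pi N_\sigma$ is annihilated by the outer $-\im$, leaving exactly $\re\int_{1/2}^{\infty}[\log\zeta(\sigma+it_2)-\log\zeta(\sigma+it_1)]\,d\sigma$, which is the lemma after reading $d\sigma$ as $ds$ along the horizontal segments.

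The main obstacle is bookkeeping rather than any deep technical difficulty. Fubini applies because $\zeta'/\zeta$ decays exponentially as $\sigma\to\infty$ and has only locally integrable (simple pole) singularities at the finitely many zeros with ordinates in $[t_1,t_2]$; the exceptional values of $\sigma$ at which the vertical edge of the rectangle actually meets a zero form a finite set and hence contribute nothing to the $\sigma$-integral. The branches of $\log\zeta$ along the two horizontal lines differ by integer multiples of $2\pi i$, which is purely imaginary and so invisible under the outer $\re$ — this is precisely the structural reason the identity comes out as cleanly as stated.
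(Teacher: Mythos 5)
Your argument is correct; note that the paper itself does not prove this lemma but only cites Lehman's Lemma~1 (and Trudgian's Lemma~2.4), so the relevant comparison is with that standard proof. Lehman integrates $\log\zeta(s)$ directly around the rectangle with vertices $\tfrac12+it_1$, $X+it_1$, $X+it_2$, $\tfrac12+it_2$, cutting the rectangle along horizontal slits from each enclosed zero to the left edge; the branch of $\log\zeta$ jumps by an integer multiple of $2\pi i$ across each slit, which is purely imaginary and hence invisible after taking real parts. You instead differentiate first, apply Fubini, and use the argument principle on the inner vertical integral; your term $-2\pi N_\sigma$, killed by the outer $\im$, is exactly the avatar of those branch jumps, so the two proofs are the same mechanism in different packaging. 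Your version buys a cleaner treatment of multivaluedness (everything is single-valued until the very end) at the cost of a Fubini justification, which you supply adequately since simple poles are locally integrable in two dimensions and $\zeta'/\zeta$ decays like $2^{-\sigma}$. Two loose ends worth a sentence each in a written-up version: (i) the agreement of the horizontal-from-$+\infty$ branch with the contour $C$ follows from $\re\zeta(s)>0$ for $\re s\ge 2$ (since $|\zeta(s)-1|\le \pi^2/6-1<1$ there), which keeps the argument in $(-\pi/2,\pi/2)$ throughout that half-plane; and (ii) when $t_1$ or $t_2$ is the ordinate of a zero your pointwise identity and contour argument degenerate, but both sides of the lemma are continuous in $t_1,t_2$ (the right side because $\re\log\zeta=\log|\zeta|$ has only integrable logarithmic singularities), so the identity extends to all $t_2>t_1>0$ with $S$ defined by its principal value.
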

\begin{proof}
    This is Lemma 2.4 in \cite{Trudgian1} or alternatively Lemma 1 in \cite{Lehman2} for a more detailed proof.
\end{proof}
So in order to estimate $S_1(t)$ we need both upper and lower bounds for 
\begin{equation}\label{integral re log zeta}
    \re\int_{\frac{1}{2}+it}^{\infty+it}\log\zeta(s)ds.
\end{equation}
We will first focus on the upper bound. For this we rely on the following version of the Phragmén-Lindelöf principle due to Fiori in conjunction with the best current bounds for $|\zeta(s)|$ on both the $\frac{1}{2}$- and the $1$-line.
\begin{proposition}
    For $t\geq 3$, we have
    \begin{equation}\label{hiary,patel,yang}
        \left|\zeta\left(\frac{1}{2}+it\right)\right|\leq0.618t^\frac{1}{6}\log t.
    \end{equation}
    For $t\geq653$, we have
    \begin{equation}\label{qingyi,teo}
        |\zeta(1+it)|\leq0.548\log t.
    \end{equation}
\end{proposition}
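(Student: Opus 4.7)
The proposition collects two external results that enter this paper only as black-box inputs to Lemma \ref{lemma trudgian}, so the plan is not to reprove either bound but to invoke the appropriate source and confirm the stated cutoff in $t$. For the first inequality I would cite the explicit subconvexity estimate of Hiary, Patel, and Yang \cite{Hiary-Patel-Yang}, which supplies the constant $0.618$ on the critical line for $t\geq 3$. For the second I would cite Qingyi and Teo \cite{Qingyi-Teo}, whose analysis on the $1$-line gives the constant $0.548$ for $t\geq 653$. In both cases, one must also check that the admissible $t$-range in the cited paper contains the range claimed here; this is routine.

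To indicate how estimates of this type are produced in the first place: on the $\frac{1}{2}$-line, the order $t^{1/6}\log t$ is the classical Weyl--Hardy--Littlewood subconvexity bound. One starts from the approximate functional equation for $\zeta(\frac{1}{2}+it)$, dyadically decomposes the resulting Dirichlet sums $\sum_{n\sim N} n^{-1/2-it}$, and bounds each block via van der Corput's $k$-th derivative tests (equivalently, via exponent pairs). Hiary, Patel, and Yang sharpen the leading constant by optimising the relevant exponent-pair parameters and tracking lower-order terms explicitly. On the $1$-line, one typically begins with an Euler--Maclaurin truncation $\zeta(s) = \sum_{n \leq N} n^{-s} + \frac{N^{1-s}}{s-1} + \cdots$, chooses $N$ as a suitable function of $t$, extracts the main term $\log t$, and refines the implicit constant through a combination of analytic optimisation and numerical verification on an initial range of $t$, which accounts for the cutoff $t\geq 653$.

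The principal obstacle, were one to attempt either estimate from scratch, is the fully explicit tracking of constants through a long chain of inequalities; even modest numerical gains in the leading constant (such as $0.548$ in place of the classical value near $1$) are the outcome of substantial parameter optimisation and comprise the main content of the cited papers. Since only the stated inequalities are used in the sequel, the proof of the proposition reduces to direct quotation of \cite{Hiary-Patel-Yang} and \cite{Qingyi-Teo}.
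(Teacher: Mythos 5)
Your proposal matches the paper exactly: the proposition is proved by direct citation of Hiary--Patel--Yang for the bound on the $\tfrac{1}{2}$-line and of Qingyi--Teo for the bound on the $1$-line, with no further argument. The additional sketch of how such explicit estimates are derived is accurate background but not required.
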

\begin{proof}
    Equation \eqref{hiary,patel,yang} is shown by Hiary, Patel and Yang in \cite{Hiary-Patel-Yang} and equation \eqref{qingyi,teo} is shown by Qingyi and Teo in \cite{Qingyi-Teo}.
\end{proof}
With these bounds we can prove the following bounds on $|\zeta(s)|$ with real part $\sigma\geq\frac{1}{2}$.
\begin{lemma}\label{fiori}
    For $t\geq t_0\geq653$, $1\geq\delta>0$, and $\sigma\in\left[\frac{1}{2},1+\delta\right]$, we have
    \begin{equation*}
        \begin{split}
            |\zeta(\sigma+it)|\leq& C_1(653)\zeta(1+\delta)^\frac{\sigma-1}{\delta}\left(0.548\log t\right)^\frac{1+\delta-\sigma}{\delta}\ \text{ for }1\leq\sigma\leq1+\delta\\
            |\zeta(\sigma+it)|\leq&C_1(653)\left(0.548\right)^{2\sigma-1}\left(0.618t^\frac{1}{6}\right)^{2-2\sigma}\log t\ \text{ for }\frac{1}{2}\leq\sigma\leq1,
        \end{split}
    \end{equation*}
    where $C_i(t)$ is decreasing in $t$ with
    \begin{equation*}
        C_1(653)\leq1+6.23\cdot10^{-6},\ \text{ and }C_2(653)\leq1+6.43\cdot10^{-6}.
    \end{equation*}
\end{lemma}
\begin{proof}
    Apply Theorem 7 from \cite{fiori} with 
    \begin{equation*}
        G(s)=\frac{\log(e+s)+\log(e+2-s)}{2}
    \end{equation*}
    and $\alpha=1,\beta=0$. We check that $|G(s)|\geq1$ and $|G(1+it)|\geq\log t$ for $t\geq653$. Furthermore, $|G(\sigma+it)|$ is increasing in $\sigma$ for $t\geq653$. So we see that all the requirements of the theorem are satisfied and together with the bound given in \eqref{qingyi,teo} we obtain that
    \begin{equation*}
        \left|\frac{s-1}{s}\zeta(s)\right|\leq\zeta(1+\delta)^\frac{\sigma-1}{\delta}0.548^\frac{1+\delta-\sigma}{\delta}G(s)^\frac{1+\delta-\sigma}{\delta}.
    \end{equation*}
    Notice that
    \begin{equation}\label{weihnachts-quiz}
        \left|\frac{\sigma+it}{\sigma-1+it}\right|\left|G(\sigma+it)\right|/\log t
    \end{equation}
    is decreasing in $t$ for $1\leq\sigma\leq2$. Furthermore, notice that \eqref{weihnachts-quiz} is increasing in $\sigma$ for $1\leq\sigma\leq2$ and $t\geq653$. Therefore, we obtain 
    \begin{equation*}
        \left|\frac{\sigma+it}{\sigma-1+it}G(\sigma+it)\right|/\log t \leq C_1(t)
    \end{equation*}
    for $1\leq\sigma\leq1+\delta<2$, $t\geq653$ and $C_1(653)<1+6.23\cdot10^{-6}$. We thus obtain
    \begin{equation*}
        |\zeta(s)|\leq C_1(653)\zeta(1+\delta)^\frac{\sigma-1}{\delta}(0.548\log t)^\frac{1+\delta-\sigma}{\delta}.
    \end{equation*}
    To prove the second inequality we apply Theorem 1 from \cite{fiori} to the function $H(s)=\frac{s-1}{sG(s)}\zeta(s)$. For $0<\sigma<2$ and $t\geq653$ we have $\log t\leq |G(\sigma+it)|$ and together with \eqref{hiary,patel,yang} and \eqref{qingyi,teo} we obtain
    \begin{equation*}
        \left|H\left(\frac{1}{2}+it\right)\right|\leq0.618 |\sigma+it|^\frac{1}{6},\ \text{ and }|H(1+it)|\leq0.548.
    \end{equation*}
    The function $|\sigma+it|^\frac{1}{6}$ is bounded below and is increasing in $\sigma$. So we may apply Theorem 1 \cite{fiori} and obtain
    \begin{equation*}
        |H(\sigma+it)|\leq0.548^{2\sigma-1}\left(0.618|\sigma+it|^\frac{1}{6}\right)^{2-2\sigma}\leq0.548^{2\sigma-1}\left(0.618t^\frac{1}{6}\right)^{2-2\sigma}\tilde{C}(\sigma,t),
    \end{equation*}
    where $\tilde{C}(\sigma,t)=\left(1+\frac{\sigma^2}{t^2}\right)^\frac{1}{12}$ is decreasing in $t$ and
    and $\tilde{C}(\sigma,t)\leq1+1.96\cdot10^{-7}$ for $t\geq 653$ and $0\leq\sigma\leq1$. Thus,
    \begin{equation*}
        |\zeta(\sigma+it)|\leq0.548^{2\sigma-1}\left(0.618t^\frac{1}{6}\right)^{2-2\sigma}\left|\frac{\sigma+it}{\sigma-1+it}G(\sigma+it)\right|(1+1.96\cdot10^{-7}).
    \end{equation*}
    Lastly, we use that $\left|\frac{\sigma+it}{\sigma-1+it}G(\sigma+it)\right|\leq C_1(653)\log t$ for $\frac{1}{2}\leq\sigma\leq1$ and $t\geq653$. Hence,
    \begin{equation*}
        |\zeta(\sigma+it)|\leq C_2(653)0.548^{2\sigma-1}\left(0.618t^\frac{1}{6}\right)^{2-2\sigma}\log t
    \end{equation*}
    for $\frac{1}{2}\leq\sigma\leq1$ and $t\geq653$. Here, $C_2(653)\leq C_1(653)(1+1.96\cdot10^{-7})\leq1+6.43\cdot10^{-6}$.
\end{proof}

With the help of these results, we find the following estimate for \eqref{integral re log zeta}.
\begin{lemma}\label{upper bound}
    For $t\geq t_0\geq653$ and $r>1$, we have
    \begin{equation*}
        \re\int_{\frac{1}{2}+it}^{\infty+it}\log\zeta(s)ds\leq a_1+b_1\log\log t+c_1\log t,
    \end{equation*}
    where
    \begin{equation*}
        \begin{split}
            a_1=&\frac{1}{2}\log C_2(653)+\frac{\log(0.618)+\log(0.548)}{4}\\
            &+\frac{r-1}{2}\left(\log C_1(653)+\log(0.548)+\log\zeta(r)\right)+\int_{r}^\infty\log\zeta(\sigma)d\sigma,\\
            b_1=&\frac{r}{2},\\
            c_1=&\frac{1}{48}.
        \end{split}
    \end{equation*}
\end{lemma}
\begin{proof}
    Applying lemma \ref{fiori} yields
    \begin{equation}\label{integral 1/2, 1}
        \begin{split}
            \re\int_{\frac{1}{2}+it}^{1+it}\log\zeta(s)ds\leq&\int_{\frac{1}{2}}^1\log C_2(653)+2(1-\sigma)\log(0.618)+2\left(\sigma-\frac{1}{2}\right)\log(0.548)d\sigma\\
            &+\int_\frac{1}{2}^1\frac{1-\sigma}{3}\log t+\log\log t\ d\sigma\\
            \leq&\frac{1}{2}\log C_2(653)+\frac{\log(0.618)+\log(0.548)}{4}+\frac{1}{2}\log\log t+\frac{1}{48}\log t
        \end{split}
    \end{equation}
    for $t\geq653$. Next, we need to estimate 
    \begin{equation*}
        \re\int_{1+it}^{\infty+it}\log\zeta(s)ds.
    \end{equation*}
    First, we split the integral and then apply lemma \ref{fiori} as well as the trivial bound $|\zeta(s)|\leq\zeta(\sigma)$ to estimate the two parts. We obtain that
    \begin{equation*}
        \begin{split}
            \re\int_{1+it}^{\infty+it}\log\zeta(s)ds\leq&\int_{1+it}^{r+it}\log|\zeta(s)|ds+\int_r^\infty\log\zeta(\sigma)d\sigma,
        \end{split}
    \end{equation*}
    now apply lemma \ref{fiori} with $\delta=r-1$. This yields
    \begin{equation*}\label{r estimate integral}
        \begin{split}
            \re\int_{1+it}^{\infty+it}\log\zeta(s)ds\leq&\int_1^r\frac{r-\sigma}{r-1}\log(0.548\log t)+\frac{\sigma-1}{r-1}\log\zeta(r)-\log\zeta(\sigma)d\sigma\\
            &+(r-1)\log C_1(653)+\int_1^\infty\log\zeta(\sigma)d\sigma.
        \end{split}
    \end{equation*}
    Combining this estimate with \eqref{integral 1/2, 1} proves the lemma.
\end{proof}
The choice of parameters $t_0$ and $r$ will be made explicit in the next section. In order to find a lower bound of \eqref{integral re log zeta} we need the following lemma.

\begin{lemma}\label{key lemma}
    Let $\omega\in\C$ with $|\re(\omega)|\leq\frac{1}{2}$, then for all $d\in\left[0.606,1\right]$ we have that
    \begin{equation}\label{das ist big}
        \begin{split}
            \int_0^d\log\left|\frac{(x+d+\omega)(x+d-\bar{\omega})}{(x+\omega)(x-\bar{\omega})}\right|dx\leq&d^2q_1\re\left(\frac{1}{d+\omega}+\frac{1}{d-\bar{\omega}}\right)\\&+d^3q_2\re\left(\frac{1}{(d+\omega)^2}+\frac{1}{(d-\omega)^2}\right).
        \end{split}
    \end{equation}
    Where $q_1=\log(2)+\frac{1}{2}$ and $q_2=\log(2)-\frac{1}{2}$.
\end{lemma}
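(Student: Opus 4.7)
The plan is to reduce the claimed two-variable inequality to a scalar inequality in a single complex variable, which can then be verified explicitly. Since $x$ is real, $|x-\bar\omega|=|x-\omega|$ and $|x+d-\bar\omega|=|x+d-\omega|$, so the integrand equals $\log|x+d+\omega|-\log|x+\omega|+\log|x+d-\omega|-\log|x-\omega|$ and the left-hand side decomposes as $A(\omega)+A(-\omega)$, where
\begin{equation*}
    A(\mu):=\int_0^d\bigl(\log|x+d+\mu|-\log|x+\mu|\bigr)\,dx.
\end{equation*}
The right-hand side is likewise symmetric under $\omega\leftrightarrow-\omega$, since $\re\bigl(1/(d-\bar\omega)\bigr)=\re\bigl(1/(d-\omega)\bigr)$. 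It therefore suffices to establish the one-sided inequality $A(\mu)\leq d^2q_1\,\re\bigl(1/(d+\mu)\bigr)+d^3q_2\,\re\bigl(1/(d+\mu)^2\bigr)$ and then apply it with $\mu\in\{\omega,-\omega\}$.

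Using the antiderivative $\int\log(x+a)\,dx=(x+a)\log(x+a)-(x+a)$ and writing $\tau:=d+\mu$, a direct computation gives $A(\mu)=\re\Phi(\tau)$, where
\begin{equation*}
    \Phi(\tau):=(\tau+d)\log(\tau+d)-2\tau\log\tau+(\tau-d)\log(\tau-d)
\end{equation*}
is the second central difference (step $d$) of $f(x):=x\log x$. Since $f''(x)=1/x$, this also equals $\int_0^d(d-s)\bigl[(\tau-s)^{-1}+(\tau+s)^{-1}\bigr]\,ds$, and substituting $w:=d/\tau$ identifies $\Phi(\tau)=d\cdot g(w)/w$ with $g(w):=(1+w)\log(1+w)+(1-w)\log(1-w)$. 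The one-sided bound therefore reduces to the scalar inequality
\begin{equation*}
    \re\frac{g(w)}{w}\leq q_1\re w+q_2\re w^2.
\end{equation*}

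For $|w|<1$, the Taylor series $g(w)/w=w+\sum_{k\geq 2}w^{2k-1}/(k(2k-1))$, together with the identity $q_1-1=q_2$, reformulates the target as $\sum_{k\geq 2}\re(w^{2k-1})/(k(2k-1))\leq q_2(\re w+\re w^2)$. Equality holds at $w=0$ and at $w=1$, the latter because $\sum_{k\geq 1}1/(k(2k-1))=2\log 2=q_1+q_2$. On the real interval $[0,1]$ the inequality follows by a direct one-variable calculus argument, and inside the complex unit disk it extends via the maximum principle applied to the harmonic function $q_1\re w+q_2\re w^2-\re[g(w)/w]$.

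The main obstacle is the case of complex $w$ with $|w|>1$. Under the hypotheses $|\re\omega|\leq 1/2$ and $d\in[0.606,1]$, $\tau=d+\mu$ lies in the half-plane $\re\tau\geq d-1/2>0$, so $w=d/\tau$ is confined to an explicit disk through the origin whose diameter may exceed $1$. To conclude, I would use the closed form $g(w)/w=2\tanh^{-1}(w)+\log(1-w^2)/w$, taking principal branches and noting that $g$ is continuous at $w=\pm 1$, together with explicit estimates on the boundary of this disk; the threshold $d\geq 0.606\approx e^{-1/2}$ is presumably the smallest value for which the required boundary estimate succeeds.
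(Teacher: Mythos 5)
There is a genuine gap, and it is not just the deferred boundary estimate at the end: the very first reduction is invalid. Your symmetrization replaces the claimed inequality by the stronger one-sided bound $A(\mu)\leq d^2q_1\re\bigl(1/(d+\mu)\bigr)+d^3q_2\re\bigl(1/(d+\mu)^2\bigr)$, applied separately to $\mu=\omega$ and $\mu=-\omega$. That one-sided bound is false. In your own normalization it reads $\re[g(w)/w]\leq q_1\re w+q_2\re(w^2)$ with $w=d/\tau$, $\tau=d+\mu$. Take $\re\mu=-1/2$ (which occurs for $\mu=-\omega$ when $\re\omega=1/2$) and $\im\mu=y$ large, so $\tau=c+iy$ with $c=d-\tfrac12$. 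Then $\re[g(w)/w]=\re w+O(y^{-4})$, while
\begin{equation*}
q_1\re w+q_2\re(w^2)-\re w=q_2\bigl(\re w+\re w^2\bigr)+O(y^{-4})=q_2\,\frac{d(c-d)}{y^2}+O(y^{-4})=-\frac{q_2 d}{2y^2}+O(y^{-4})<0,
\end{equation*}
using $q_1-1=q_2$. So the difference you need to be nonpositive is $+q_2d/(2y^2)+O(y^{-4})>0$. The lemma survives only because the companion term with $\re\mu=+1/2$ has slack $-q_2d/(2y^2)$ at the same order, and the two contributions cancel; this cancellation is exactly what the paper's proof exploits when it shows that, in the derivative along the boundary line, all terms up to order $t^{-3}$ cancel and the first surviving term is of order $t^{-5}$. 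Decoupling $\omega$ from $-\omega$ destroys this and cannot work near the edge of the strip.

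Beyond that, the remaining steps are also incomplete where the real difficulty sits. The variable $w=d/\tau$ ranges over a disk of diameter $d/(d-\tfrac12)$ (about $5.7$ when $d=0.606$), so the regime $|w|>1$ is not an exotic corner case but contains, e.g., all real $w\in(1,d/(d-\tfrac12)]$; your calculus argument on $[0,1]$ and maximum principle on the unit disk do not reach it, and $g(w)/w$ is not analytic across $w=1$, so the harmonicity needed for the maximum principle fails precisely on the region you must cover. The paper instead works with $H_d(\omega)=\mathrm{LHS}-\mathrm{RHS}$ as a whole, rescales to $H_1(\omega/d)$, and applies Phragm\'en--Lindel\"of on the strip $|\re\omega|\leq\frac{1}{2d}$, reducing everything to the real axis (a one-variable monotonicity argument with maximum $H_1(0)=0$) and to the vertical boundary lines, where it combines a series expansion for $t\geq10$ with a numerical location of the finitely many extrema for $|t|<10$. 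If you want to salvage your framework, you must keep the $\omega$ and $-\omega$ contributions together and carry out the boundary analysis explicitly; as written, the proposal does not prove the lemma.
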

\begin{proof}
    We consider the function $H_d(\omega)$ given by
    \begin{equation*}
        \begin{split}
            H_d(\omega)=&\int_0^d\log\left|\frac{(x+d+\omega)(x+d-\bar{\omega})}{(x+\omega)(x-\bar{\omega})}\right|dx-d^2q_1\re\left(\frac{1}{d+\omega}+\frac{1}{d-\bar{\omega}}\right)\\
            &-d^3q_2\re\left(\frac{1}{(d+\omega)^2}+\frac{1}{(d-\omega)^2}\right).
        \end{split}
    \end{equation*}
    After a substitution, we obtain that
    \begin{equation*}
        H_d(\omega)=dH_1\left(\omega/d\right)
    \end{equation*}
    Notice that this is the real part of an analytic function. Furthermore, notice that $H_1(\sigma+it)\rightarrow0$. So, the function is bounded and we may apply the Phragmén-Lindelöf principle \cite[pp.180-181]{titchmarsh2} to verify that $H_1(\omega)\leq0$ in the area with $|\re(\omega)|\leq\frac{1}{2d}$.
    On the real line we can easily verify that
    \begin{equation*}
        \begin{split}
            \frac{d}{d\sigma}H_1(\sigma)=&\log\left(1-\frac{4\sigma}{(1-\sigma)^2(2+\sigma)}\right)-q_1\left(\frac{1}{(1-\sigma)^2}-\frac{1}{(1+\sigma)^2}\right)\\
            &-2q_2\left(\frac{1}{(1-\sigma)^3}-\frac{1}{(1+\sigma)^3}\right).
        \end{split}
    \end{equation*}
    Each of these terms is positive for negative $\sigma$ and negative for positive $\sigma$ for $\sigma\in(-1,1)$. Hence, $H_1(\sigma)$ is maximized at $0$. We deduce that
    \begin{equation*}
        H_d(\sigma)=dH_1(\sigma/d,q_1,q_2)\leq dH_1(0)=2d\log(4)-2dq_1-2dq_2=0
    \end{equation*}
    for $\sigma\in\left[-\frac{1}{2d},\frac{1}{2d}\right]\subset\left[-\frac{1}{2\cdot0.606},\frac{1}{2\cdot0.606}\right]$. Next, we need to verify that $H_1\left(\frac{1}{2d}+it\right)\leq0$ for all $t\in\R$ and all $d\in\left[0.606,1\right]$. It is sufficient to test one side of the boundary of $|\re(\omega)|\leq\frac{1}{2d}$ for this lemma due to the symmetry of $H_1$ about the line with real part $0$ and the Phragmén-Lindelöf principle. The line with real part $\frac{1}{2d}$ is contained in the area bounded by $|\re(s)|\leq\frac{1}{2\cdot0.606}$. Computing the derivative of $H_1\left(\frac{1}{2\cdot0.606}+it\right)$ with respect to $t$ returns
    \begin{equation*}
        \begin{split}
            \frac{d}{dt}H_1\left(\sigma^*+it\right)=&\arctan\left(\frac{\sigma^*+2}{t}\right)-2\arctan\left(\frac{\sigma^*+1}{t}\right)\\
            &+2\arctan\left(\frac{\sigma^*-1}{t}\right)-\arctan\left(\frac{\sigma^*-2}{t}\right)\\
            &+2tq_1\left(\frac{1+\sigma^*}{((1+\sigma^*)^2+t^2)^2}+\frac{1-\sigma^*}{((1-\sigma^*)^2+t^2)^2}\right)\\
            &+2tq_2\left(\frac{3(1+\sigma^*)^2-t^2}{((1+\sigma^*)^2+t^2)^3}+\frac{3(1-\sigma^*)^2-t^2}{((1-\sigma^*)^2+t^2)^3}\right),
        \end{split}
    \end{equation*}
    where $\sigma^*=\frac{1}{2\cdot0.606}$. For $t\geq10$ we may compute the $\arctan$-terms by their taylor series. Similarly, we may rewrite the contribution of the poles as geometric series. Overall, we obtain that we may express $\frac{d}{dt}H_1(\sigma^*+it)$ by
    \begin{equation*}
        \begin{split}
            \frac{d}{dt}H_1(\sigma^*+it)=&\sum_{k=0}^\infty(-1)^k\frac{(\sigma^*+2)^{2k+1}+(\sigma^*-2)^{2k+1}}{2k+1}t^{-2k-1}\\
            &-2\sum_{k=0}^\infty(-1)^k\frac{(\sigma^*+1)^{2k+1}+(\sigma^*-1)^{2k+1}}{2k+1}t^{-2k-1}\\
            &-q_1\sum_{k=1}^\infty(-1)^k2k\left((1+\sigma^*)^{2k-1}+(1-\sigma^*)^{2k-1}\right)t^{-2k-1}\\
            &+q_2\sum_{k=1}^\infty(-1)^k2k(2k-1)\left((1+\sigma^*)^{2k-2}+(1-\sigma^*)^{2k-2}\right)t^{-2k-1}.
        \end{split}
    \end{equation*}
    Notice that the the terms up to order $t^{-3}$ cancel and that we may lower bound the remaining terms by the alternating series criterion. Here we use that $t\geq10$ is much larger than $|\sigma^*\pm1|$ and $|\sigma^*\pm2|$. We obtain that
    \begin{equation*}
        \frac{d}{dt}H_1(\sigma^*+it)\geq\frac{16\log(2)-4}{t^5}-\frac{270}{t^7}>0
    \end{equation*}
    for $t\geq10$ and deduce that all local extrema of $H_1(\sigma^*+it)$ must lie in the range with $|t|<10$. By a numerical analysis we obtain that $H_1(\sigma^*+it)$ has local extrema at $t\approx\pm0.738$ and at $t\approx\pm1.505$. The former correspond to local maxima and the latter correspond to local minima of $H_1(\sigma^*+it)$. Evaluating $H_1(\sigma^*+it)$ in the vicinity of $\pm0.738$ shows that $H_1(s)$ is not positive on the line with $\re(s)=\frac{1}{2\cdot0.606}$. And by the Phrangmén-Lindelöf principle $H_1(\frac{1}{2d}+it)\leq0$ for all $d\in\left[0.606,1\right]$.
\end{proof}
With this result at hand we may show the following lower bound for \eqref{integral re log zeta}.
\begin{lemma}\label{lemma lower bound}
    Let $t\geq653$ and $d\in[0.606,1]$. Then,
    \begin{equation*}
        -\re\int_{\frac{1}{2}+it}^{\infty+it}\log\zeta(s)ds\leq a_2+c_2\log t,
    \end{equation*}
    where
    \begin{equation*}
        \begin{split}
            a_2=&I(d)+\frac{d^2(1-q_1)}{2}\log(2\pi)+d^2q_1\epsilon-d^2q_1\left(\frac{\zeta'}{\zeta}\left(\frac{1}{2}+d\right)-2\frac{\zeta'}{\zeta}(1+2d)\right)\\
            &+d^3q_2\left(\left(\frac{\zeta''}{\zeta}-\left(\frac{\zeta'}{\zeta}\right)^2\right)\left(\frac{1}{2}+d\right)-4\left(\frac{\zeta''}{\zeta}-\left(\frac{\zeta'}{\zeta}\right)^2\right)(1+2d)+\frac{\log^2(2)}{8}+\frac{\log^2(3)}{128}\right)\\
            c_2=&\frac{d^2(q_1-1)}{2}
        \end{split}
    \end{equation*}
    with
    \begin{equation}\label{400000 jahre geschichte}
        \begin{split}
            I(d)=\int_{\frac{1}{2}+d}^\infty\log\zeta(\sigma)d\sigma+\int_{\frac{1}{2}+d}^{\frac{1}{2}+2d}\log\zeta(\sigma)d\sigma-\frac{1}{2}\int_{1+2d}^\infty\log\zeta(\sigma)d\sigma-\frac{1}{2}\int_{1+2d}^{1+4d}\log\zeta(\sigma)d\sigma,
        \end{split}
    \end{equation}
    and $\epsilon<3\cdot10^{-6}$. Here and throughout the proof $q_1=\log(2)+\frac{1}{2}$ and $q_2=\log(2)-\frac{1}{2}$.
\end{lemma}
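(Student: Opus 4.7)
The plan is to rewrite the quantity as $-\int_{1/2}^\infty\log|\zeta(\sigma+it)|\,d\sigma$ (the real part commutes with the integration in $\sigma$), split the range at $\sigma=1/2+d$, and bound each piece by different means: the tail cheaply via the Euler product, and the critical-strip piece via a Hadamard-style identity combined with Lemma \ref{key lemma}.

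For the tail $[1/2+d,\infty)$, since $\sigma>1$ throughout, the Dirichlet series $\log\zeta(\sigma+it)=\sum_{p,k}(kp^{k(\sigma+it)})^{-1}$ gives $\log|\zeta(\sigma+it)|\ge -\log\zeta(\sigma)$, so $-\int_{1/2+d}^\infty\log|\zeta(\sigma+it)|\,d\sigma\le\int_{1/2+d}^\infty\log\zeta(\sigma)\,d\sigma$, which is precisely the first piece of $I(d)$.

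For the delicate part on $[1/2,1/2+d]$, I would substitute the Hadamard factorization $\log|\zeta(s)|=\sum_\rho\log|1-s/\rho|-\log|s-1|+\re[\tfrac{s}{2}\log\pi-\log\Gamma(s/2+1)]+(\text{constants})$ and treat the smooth and zero-sum parts separately. The smooth ($\Gamma$- and pole-) part, after Stirling and careful bookkeeping, produces the leading $c_2\log t=\tfrac{d^2q_2}{2}\log t$ term, the constant $\tfrac{d^2(1-q_1)}{2}\log(2\pi)$, and a tiny $O(t^{-2})$-type correction absorbed into $d^2q_1\epsilon$. For the zero-sum, I would pair each non-trivial zero $\rho$ with a conjugate/reflected companion so that the combined integrand, after the substitution $s=1/2+x+it$, reorganizes into the symmetric form $\int_0^d\log|(x+d+\omega)(x+d-\bar\omega)/((x+\omega)(x-\bar\omega))|\,dx$ of Lemma \ref{key lemma}, with $\omega=\omega_\rho$ a translate of $\rho$ satisfying $|\re\omega_\rho|\le\tfrac12$. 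Applying Lemma \ref{key lemma} to each zero and summing over $\rho$ converts everything to sums of the form $\sum_\rho\re(s_0-\rho)^{-k}$ for $k=1,2$ at the two points $s_0=1/2+d$ and $s_0=1+2d$; the latter ``doubled'' point appears because the LHS of the key lemma implicitly compares $\log|\zeta|$ at $\re s=1/2$ with its value at $\re s=1/2+2d$. Evaluating these sums by the standard partial-fraction formula $\sum_\rho (s_0-\rho)^{-1}=-\zeta'/\zeta(s_0)+(\Gamma\text{-terms})$ and its derivative (which gives $\sum_\rho(s_0-\rho)^{-2}$ in terms of $\zeta''/\zeta-(\zeta'/\zeta)^2$ at $s_0$) yields exactly the four derivative terms visible in $a_2$.

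The main obstacle is the precise management of auxiliary integrals and explicit constants. Specifically, (i) the exact pairing of zeros needed to produce the symmetric form $(x+\omega)(x-\bar\omega)$ required by Lemma \ref{key lemma}, (ii) the small contributions from the trivial zeros and from the simple pole at $s=1$, which together yield the $\log(2\pi)$ constant and the $\epsilon<3\cdot10^{-6}$ error, and (iii) the first-few-primes residues in the Dirichlet expansion of $(\log\zeta)''$ at $s=1+2d$, which is where the explicit $\log^2(2)/8+\log^2(3)/128$ correction enters. The extra ``leftover'' integrals $\int_{1/2+d}^{1/2+2d}\log\zeta(\sigma)\,d\sigma$ and $\int_{1+2d}^{1+4d}\log\zeta(\sigma)\,d\sigma$ appearing in $I(d)$ should then emerge naturally as boundary contributions in the reduction linking the zero-sums at $s_0=1/2+d$ to those at $s_0=1+2d$.
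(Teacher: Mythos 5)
Your overall architecture---Hadamard product, pairing $\rho$ with $1-\bar{\rho}$, Lemma \ref{key lemma}, partial fractions---is the right one, but two steps as you describe them would not go through. First, the source of $I(d)$. Your tail bound $-\log|\zeta(\sigma+it)|\le\log\zeta(\sigma)$ produces only the first of the four integrals in \eqref{400000 jahre geschichte}, and the remaining three do not ``emerge as boundary contributions'' of the zero-sum reduction. In the paper all of $I(d)$ comes from a different decomposition: one writes $-\re\int_{1/2+it}^{\infty+it}\log\zeta\,ds=\re\int_{1/2+it}^{1/2+d+it}\log\frac{\zeta(s+d)}{\zeta(s)}\,ds-\re\int_{1/2+d+it}^{\infty+it}\log\zeta\,ds-\re\int_{1/2+d+it}^{1/2+2d+it}\log\zeta\,ds$ and bounds the last two integrals with the sharper inequality $|1/\zeta(s)|\le\zeta(\sigma)/\zeta(2\sigma)$; the $\zeta(2\sigma)$ factor is what generates the negative half-weight integrals over $[1+2d,\infty)$ and $[1+2d,1+4d]$, and the shift $s\mapsto s+d$ is what generates the positive $\int_{1/2+d}^{1/2+2d}\log\zeta(\sigma)\,d\sigma$. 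The shift is not optional: Lemma \ref{key lemma} bounds the integral of the logarithm of a \emph{ratio} of zero-factors at $s+d$ and at $s$, so the Hadamard factorization must be applied to $\zeta(s+d)/\zeta(s)$ on the segment $[\frac12+it,\frac12+d+it]$, not to $\log|\zeta(s)|$ alone; your plan as written does not produce an integrand of the required form and also forfeits the $\zeta(2\sigma)$ savings on the tail.

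Second, the passage from height $t$ to the real axis. The partial-fraction formula turns the zero-sums into $\re\frac{\zeta'}{\zeta}\left(\frac12+d+it\right)$ and $\re\left(\frac{\zeta''}{\zeta}-\left(\frac{\zeta'}{\zeta}\right)^2\right)\left(\frac12+d+it\right)$, quantities that still depend on $t$, whereas $a_2$ contains values at the real points $\frac12+d$ and $1+2d$. Bridging this gap is a substantive step, namely \eqref{backzeit 12 min}: an Euler-product argument exploiting cancellation between odd and even prime powers gives $\re\frac{\zeta'}{\zeta}(\sigma+it)\le-\frac{\zeta'}{\zeta}(\sigma)+2\frac{\zeta'}{\zeta}(2\sigma)$ and an analogous bound for the second logarithmic derivative, whose failure at the primes $2$ and $3$ is exactly the $\frac{\log^2 2}{8}+\frac{\log^2 3}{128}$ correction. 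This is one of the paper's advertised improvements (Remark \ref{tatort time}) and cannot be dismissed as a ``first-few-primes residue'' detail; without it the evaluations at $1+2d$ in $a_2$ do not appear at all. (Your identification $c_2=\frac{d^2q_2}{2}$ is numerically correct since $q_1-1=q_2$, but it arises as the difference between $\frac{d^2q_1}{2}\log\frac{t}{2\pi}$ coming from the zero-sum and $\frac{d^2}{2}\log\frac{t}{2}$ coming from the separate $\Gamma$-integral, not from the smooth part alone.)
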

\begin{proof}
    Write
    \begin{equation*}
        \begin{split}
            -\re\int_{\frac{1}{2}+it}^{\infty+it}\log\zeta(s)ds=&\re\int_{\frac{1}{2}+it}^{\frac{1}{2}+d+it}\log\frac{\zeta(s+d)}{\zeta(s)}ds\\
            &-\re\int_{\frac{1}{2}+d+it}^{\infty+it}\log\zeta(s)ds-\re\int_{\frac{1}{2}+d+it}^{\frac{1}{2}+2d+it}\log\zeta(s)ds\\
            \leq&\re\int_{\frac{1}{2}+it}^{\frac{1}{2}+d+it}\log\frac{\zeta(s+d)}{\zeta(s)}ds+I(d)
        \end{split}
    \end{equation*}
    where
    \begin{equation*}
        \begin{split}
            I(d)=\int_{\frac{1}{2}+d}^\infty\log\zeta(\sigma)d\sigma+\int_{\frac{1}{2}+d}^{\frac{1}{2}+2d}\log\zeta(\sigma)d\sigma-\frac{1}{2}\int_{1+2d}^\infty\log\zeta(\sigma)d\sigma-\frac{1}{2}\int_{1+2d}^{1+4d}\log\zeta(\sigma)d\sigma.
        \end{split}
    \end{equation*}
    Here, we used the inequality that $\left|\frac{1}{\zeta(s)}\right|\leq\frac{\zeta(\sigma)}{\zeta(2\sigma)}$ for $\sigma>1$. All of the integrals in equation \eqref{400000 jahre geschichte} are convergent and will be evaluated at the end of the proof. Next, recall the Weierstrass product formula \cite[pp. 82-83]{davenport} for $\zeta(s)$,
    \begin{equation*}
        \zeta(s)=\frac{e^{bs}}{2(s-1)\Gamma(1+\frac{s}{2})}\prod_\rho\left(1-\frac{s}{\rho}\right)e^{s/\rho},
    \end{equation*}
    where the product is taken over the non-trivial zeros of $\zeta(s)$ and the constant $b$ satisfies
    \begin{equation*}
        b=\frac{1}{2}\log\pi-\re\sum_\rho\frac{1}{\rho}.
    \end{equation*}
    This expression is absolutely convergent. Thus, we may use this product to rewrite $\log\zeta(s)$ as an infinite series involving the non-trivial zeros of $\zeta(s)$ and after grouping off critical line zeros with their symmetric counterpart ($\rho$ and $1-\bar{\rho}$), we obtain
    \begin{equation*}
        \begin{split}
            -\re\int_{\frac{1}{2}+it}^{\infty+it}\log\zeta(s)ds\leq&\frac{1}{2}\sum_\rho\int_{\frac{1}{2}+it}^{\frac{1}{2}+d+it}\log\left|\frac{(s-\rho+d)(s-(1-\bar{\rho})+d)}{(s-\rho)(s-(1-\bar{\rho}))}\right|ds\\
            &-\int_{\frac{1}{2}+it}^{\frac{1}{2}+d+it}\log\left|\frac{\Gamma\left(\frac{s+d}{2}+1\right)}{\Gamma\left(\frac{s}{2}+1\right)}\right|ds\\
            &+\int_{\frac{1}{2}+it}^{\frac{1}{2}+d+it}\log\left|\frac{s-1}{s+d-1}\right|ds-I(d)+\frac{d^2}{2}\log\pi\\
            =&I_1-I_2+I_3+I(d)+\frac{d^2}{2}\log\pi.
        \end{split}
    \end{equation*}
    The integral $I_3\leq0$ and approaches $0$ as $t\rightarrow\infty$. We can bound the contribution of $I_1$ with Lemma 4 and obtain that
    \begin{equation*}
        \begin{split}
            I_1\leq& d^2q_1\re\sum_\rho\frac{1}{\frac{1}{2}+d+it-\rho}+d^3\left(\log(2)-\frac{1}{2}\right)\re\sum_\rho\frac{1}{(\frac{1}{2}+d+it-\rho)^2}\\
            =&d^2q_1\re\frac{\xi'}{\xi}\left(\frac{1}{2}+d+it\right)-d^3q_2\re\left(\frac{\xi''}{\xi}-\left(\frac{\xi'}{\xi}\right)^2\right)\left(\frac{1}{2}+d+it\right)\\
            =&d^2q_1\left[\re\frac{\zeta'}{\zeta}\left(\frac{1}{2}+d+it\right)+\frac{d-\frac{1}{2}}{\left(d-\frac{1}{2}\right)^2+t^2}+\frac{1}{2}\re\psi\left(\frac{3}{4}+\frac{d}{2}+\frac{it}{2}\right)-\frac{\log\pi}{2}\right]\\
            &-d^3q_2\left(\re\left(\frac{\zeta''}{\zeta}-\left(\frac{\zeta'}{\zeta}\right)^2\right)\left(\frac{1}{2}+d+it\right)-\frac{\left(d-\frac{1}{2}\right)^2-t^2}{\left(\left(d-\frac{1}{2}\right)+t^2\right)^2}+\frac{1}{4}\psi_1\left(\frac{3}{4}+\frac{d}{2}+\frac{it}{2}\right)\right),
        \end{split}
    \end{equation*}
    where $\psi$ and $\psi_1$ denote the di- and trigamma function respectively. Notice that the contributions of the pole, $\frac{d-\frac{1}{2}}{\left(d-\frac{1}{2}\right)^2+t^2}$ and $-\frac{\left(d-\frac{1}{2}\right)^2-t^2}{\left(\left(d-\frac{1}{2}\right)^2+t^2\right)^2}$, are decreasing with $|t|$ as $|t|$ grows large. Furthermore, we use the estimates in \cite[cor. 2.5]{amberger2025estimatingnumberzerosdedekind} to bound the contributions of the di-and trigamma terms yielding that
    \begin{equation*}
        \begin{split}
            \frac{d-\frac{1}{2}}{\left(d-\frac{1}{2}\right)^2+t^2}+\frac{1}{2}\re\psi\left(\frac{3}{4}+\frac{d}{2}+\frac{it}{2}\right)\leq&\frac{1}{2}\log\frac{t}{2}+\epsilon\text{ and}\\
            \frac{\left(d-\frac{1}{2}\right)^2-t^2}{\left(\left(d-\frac{1}{2}\right)^2+t^2\right)}-\frac{1}{4}\re\psi_1\left(\frac{3}{4}+\frac{d}{2}+\frac{it}{2}\right)\leq&0,
        \end{split}
    \end{equation*}
    for $t\geq653$ where $\epsilon=3\cdot10^{-6}$. Furthermore, we need some estimates for the first and second logarithmic derivatives of the zeta function. Because $d>\frac{1}{2}$ we may use the Euler-product to represent the zeta function in this area. This yields the following representations for the logarithmic derivatives of zeta. We have
    \begin{equation*}
        \begin{split}
            \re\frac{\zeta'}{\zeta}(s)=&-\sum_{p\in\mathbb{P}}\frac{\log(p)p^\sigma\cos(t\log p)}{1-2\cos(t\log p)p^\sigma+p^{2\sigma}}\text{ and}\\
            -\re\left(\frac{\zeta''}{\zeta}-\left(\frac{\zeta'}{\zeta}\right)^2\right)(s)=&-\sum_{p\in\mathbb{P}}\frac{\log^2(p)((p^{3\sigma}+p^\sigma)\cos(t\log p)-2p^{2\sigma})}{(1-2\cos(t\log p)p^\sigma+p^{2\sigma})^2}.
        \end{split}
    \end{equation*}
    The function $-\frac{y}{1-2yx+x^2}$ is decreasing in $y$ for all $x\geq2$. Furthermore, consider the function given by
    \begin{equation*}
        f(x,y)=-\frac{x(x^2+1)y-2x^2}{(1-2xy+x^2)^2}-\frac{x}{(1+x)^2}
    \end{equation*}
    with partial derivatives
    \begin{equation*}
        \begin{split}
            \partial_x&f(x,y)=\\
            &\frac{(y+1)(x-1)(x^6+1+(x^4+1)(2y-4)x+(x^2+1)(20y-13)x^2-(8y^2-20y+24)x^3)}{(x+1)^3(x^2-2xy+1)^3},\\
            \partial_y&f(x,y)=-\frac{x(x^4+2x^3y-6x^2+2xy+1)}{(x^2-2xy+1)^3}.
        \end{split}
    \end{equation*}
    We observe that for $x\geq2$ the local extrema of $f(x,\cdot)$ are achieved if and only if $x^4+2x^3y-6x^2+2xy+1=0$. For $x\geq2$ this has a solution with $y\in[-1,1]$ only when $x\in[2,2+\sqrt{3}]$. Solving for $y$ and substituting in $\partial_xf(x,y)$ yields 
    \begin{equation*}
        \partial_x\left( \max_{y\in[-1,1]}f(x,y)\right)=\frac{x^4-5x^3+6x^2-5x+1}{(x^2-1)^3}\leq0
    \end{equation*} for $x\in[2,2+\sqrt{3}]$. Hence, $g(x):=\max_{y\in(-1,1]}f(x,y)$ is decreasing for $x\geq2$ with $g(2)=\frac{1}{8}$, $g(3)=\frac{1}{128}$ and $g(x)=0$ for $x\geq2+\sqrt{3}$. We thus obtain the
    \begin{equation}\label{backzeit 12 min}
        \begin{split}
            \re\frac{\zeta'}{\zeta}(s)\leq&-\frac{\zeta'}{\zeta}(\sigma)+2\frac{\zeta'}{\zeta}(2\sigma)\text{ and}\\
            -\re\left(\frac{\zeta''}{\zeta}-\left(\frac{\zeta'}{\zeta}\right)^2\right)(s)\leq&\left(\frac{\zeta''}{\zeta}-\left(\frac{\zeta'}{\zeta}\right)^2\right)(\sigma)-4\left(\frac{\zeta''}{\zeta}-\left(\frac{\zeta'}{\zeta}\right)^2\right)(2\sigma)\\
            &+\frac{\log^2(2)}{8}+\frac{\log^2(3)}{128}.
        \end{split}
    \end{equation}
    Therefore,
    \begin{equation*}
        \begin{split}
            I_1\leq&\frac{d^2}{2}q_1\log\frac{t}{2\pi}+d^2q_1\epsilon+d^2\max_{t\in\R}\left\{\left[q_1\re\frac{\zeta'}{\zeta}-dq_2\re\left(\frac{\zeta''}{\zeta}-\left(\frac{\zeta'}{\zeta}\right)^2\right)\right]\left(\frac{1}{2}+d+it\right)\right\}\\
            \leq&\frac{d^2}{2}q_1\log\frac{t}{2\pi}+d^2q_1\epsilon-d^2q_1\left(\frac{\zeta'}{\zeta}\left(\frac{1}{2}+d\right)-2\frac{\zeta'}{\zeta}(1+2d)\right)\\
            +&d^3q_2\left(\left(\frac{\zeta''}{\zeta}-\left(\frac{\zeta'}{\zeta}\right)^2\right)\left(\frac{1}{2}+d\right)-4\left(\frac{\zeta''}{\zeta}-\left(\frac{\zeta'}{\zeta}\right)^2\right)(1+2d)+\frac{\log^2(2)}{8}+\frac{\log^2(3)}{128}\right).
        \end{split}
    \end{equation*}
    To evaluate $I_2$ we apply the mean value theorem for integrals. Thus, we see that
    \begin{equation*}
        \begin{split}
            -I_2=&-\frac{1}{2}\int_{\frac{1}{2}+it}^{\frac{1}{2}+d+it}\int_0^d\re\psi\left(\frac{s+z}{2}+1\right)dzds=-\frac{d^2}{2}\re\psi\left(\sigma_0+\frac{it}{2}\right)
        \end{split}
    \end{equation*}
    for some $\sigma_0\in\left(\frac{5}{4},\frac{5}{4}+d\right)$. We may now bound this contribution by referring to the digamma estimates in \cite[cor. 2.5]{amberger2025estimatingnumberzerosdedekind}. We have that
    \begin{equation*}
        -I_2\leq-\frac{d^2}{2}\log\frac{t}{2}.
    \end{equation*}
    By combining these bounds we obtain that
    \begin{equation*}\label{lower bound a2,c2}
        -\re\int_{\frac{1}{2}+it}^{\infty+it}\log\zeta(s)ds\leq a_2+c_2\log t
    \end{equation*}
    with
    \begin{equation*}
        \begin{split}
            a_2=&I(d)+\frac{d^2(1-q_1)}{2}\log(2\pi)+d^2q_1\epsilon-d^2q_1\left(\frac{\zeta'}{\zeta}\left(\frac{1}{2}+d\right)-2\frac{\zeta'}{\zeta}(1+2d)\right)\\
            &+d^3q_2\left(\left(\frac{\zeta''}{\zeta}-\left(\frac{\zeta'}{\zeta}\right)^2\right)\left(\frac{1}{2}+d\right)-4\left(\frac{\zeta''}{\zeta}-\left(\frac{\zeta'}{\zeta}\right)^2\right)(1+2d)+\frac{\log^2(2)}{8}+\frac{\log^2(3)}{128}\right)\\
            c_2=&\frac{d^2(q_1-1)}{2},
        \end{split}
    \end{equation*}
    thereby proving the lemma.
\end{proof}
\begin{remark}\label{tatort time}
    Here we got another small improvement upon Trudgian's bound through the use of a sharper lower bound for the real part of the logarithmic derivative. The idea is similar to the trivial lower bound for the zeta function that states that $|\zeta(s)|\geq\frac{\zeta(2\sigma)}{\zeta(\sigma)}$. This bound arises from considering the euler-product for $\zeta(s)$,
    \begin{equation*}
        \sum_{k=0}^\infty p^{-ks}=\sum_{k=1}^\infty p^{-k\sigma}e^{-i\log(p)tk}.
    \end{equation*}
    Notice that $\re(e^{-i\log(p)2kt})$ is positive if $\re(e^{-i\log(p)kt})$ is negative. Hence, cancellations arise between terms associated with even and odd prime powers, which, in turn, suggests the presence of the $\zeta(2\sigma)$ factor. The same cancellations between odd and even prime powers give rise to the improved estimates of the first and second logarithmic derivative of $\zeta(s)$ in \eqref{backzeit 12 min}.
\end{remark}
\section{Computations and comparison to previous estimates}\label{section Tables}
In this section, we compute admissible values for $(a_1,b_1,c_1)$ and $(a_2,c_2)$ and prove theorem \ref{S_1 estimates theorem}. First, we search for admissible $(a_1,b_1,c_1)$. We wish to optimize the upper bound in lemma \ref{upper bound} at a specific height $t'\geq t_0\geq653$. We do so by varying $r\in\{1.002,1.004,\dots,1.5\}$ and choosing a minimizer from this subset of $(1,\infty)$. We present our findings in table \eqref{table: a} on the next page.  We only list the parameters $a_1$ and $b_1$ because $c_1=\frac{1}{48}$ was chose independent of $r$. Furthermore, we want to optimize $a_2$ and $c_2$ at fixed heights $t'$. To optimize the bound with respect to $d$ we resort to testing $d\in\{0.606,0.607,\dots,1\}$. Table \eqref{table: d} contains the results for $(d,a_2,b_2)$.
\begin{table}[h]
\parbox{.45\linewidth}{
\centering
\begin{tabular}{||c c c c||}
    \hline
    $t'$ & $r$ &$a_1$ & $b_1$ \\ [0.5ex] 
    \hline\hline
    $10^3$ & $1.482$ & $1.132$ & $0.741$ \\ 
    $10^4$ & $1.404$ & $1.213$ & $0.702$ \\
    $10^5$ & $1.352$ & $1.274$ & $0.676$\\
    $10^6$ & $1.316$ & $1.319$ & $0.658$\\
    $10^7$ & $1.290$ & $1.354$ & $0.645$ \\
    $10^8$ & $1.268$ & $1.386$ & $0.634$ \\ 
    $10^9$ & $1.252$ & $1.409$ & $0.626$ \\
    $10^{10}$ & $1.236$ & $1.434$ & $0.618$ \\
    $10^{11}$ & $1.224$ & $1.453$ & $0.612$ \\
    $10^{12}$ & $1.214$ & $1.470$ & $0.607$ \\
    $10^{13}$ & $1.204$ & $1.486$ & $0.602$ \\
    $10^{14}$ & $1.196$ & $1.500$ & $0.598$ \\
    $10^{15}$ & $1.188$ & $1.514$ & $0.594$ \\
    $10^{16}$ & $1.182$ & $1.525$ & $0.591$ \\
    $10^{17}$ & $1.176$ & $1.536$ & $0.588$ \\
    $10^{18}$ & $1.17$ & $1.547$ & $0.585$ \\[1ex]  
    \hline
    \end{tabular}
    \caption{admissible values $(t',a_1,b_1)$ for $t_0=653$}
    \label{table: a}
}
\hfill
\parbox{.45\linewidth}{
\centering
    \begin{tabular}{||c c c c||}
    \hline
    $t'$ & $d$ & $a_2$ & $c_2$ \\ [0.5ex] 
    \hline\hline
    $10^3$ & $1.000$ & $3.068$ & $0.0966$ \\ 
    $10^7$ & $1.000$ & $3.068$ & $0.0966$ \\
    $10^8$ & $0.977$ & $3.145$ & $0.0922$ \\
    $10^9$ & $0.952$ & $3.236$ & $0.0876$ \\ 
    $10^{10}$ & $0.931$ & $3.320$ & $0.0838$\\
    $10^{11}$ & $0.913$ & $3.397$ & $0.0806$\\
    $10^{12}$ & $0.897$ & $3.471$ & $0.0778$\\
    $10^{13}$ & $0.883$ & $3.540$ & $0.0753$\\
    $10^{14}$ & $0.871$ & $3.603$ & $0.0733$\\
    $10^{15}$ & $0.859$ & $3.670$ & $0.0713$\\
    $10^{16}$ & $0.849$ & $3.729$ & $0.0697$\\
    $10^{17}$ & $0.840$ & $3.785$ & $0.0682$\\
    $10^{18}$ & $0.832$ & $3.837$ & $0.0669$\\[1ex] 
    \hline
    \end{tabular}
    \caption{admissible values for $(t',a_2,c_2)$}
    \label{table: d}    }
\end{table}

With these values and lemmas \ref{switch from vertical to horizontal lines}, \ref{upper bound}, and \ref{lemma lower bound} we conclude that the following table contains admissible choices for $(a,b,c)$ such that \ref{S_1 estimates theorem} holds true. Table \eqref{table: e} 
contains admissible choices for $(a,b,c)$.
\newpage
\begin{table}
\centering
\begin{tabular}{||c c c c||}
    \hline
    $t'$ & $a$ &$b$ & $c$ \\ [0.5ex] 
    \hline\hline
    $10^3$ & $1.337$ & $0.236$ & $0.0374$ \\ 
    $10^4$ & $1.363$ & $0.224$ & $0.0374$ \\
    $10^5$ & $1.383$ & $0.216$ & $0.0374$\\
    $10^6$ & $1.397$ & $0.210$ & $0.0374$\\
    $10^7$ & $1.408$ & $0.206$ & $0.0374$ \\
    $10^8$ & $1.443$ & $0.202$ & $0.0360$ \\ 
    $10^9$ & $1.479$ & $0.200$ & $0.0345$ \\
    $10^{10}$ & $1.514$ & $0.197$ & $0.0333$ \\
    $10^{11}$ & $1.544$ & $0.195$ & $0.0323$ \\
    $10^{12}$ & $1.573$ & $0.194$ & $0.0314$ \\
    $10^{13}$ & $1.600$ & $0.192$ & $0.0306$ \\
    $10^{14}$ & $1.625$ & $0.191$ & $0.0300$ \\
    $10^{15}$ & $1.651$ & $0.190$ & $0.0294$ \\
    $10^{16}$ & $1.673$ & $0.189$ & $0.0288$ \\
    $10^{17}$ & $1.694$ & $0.188$ & $0.0284$ \\
    $10^{18}$ & $1.714$ & $0.187$ & $0.0280$ \\[1ex]  
    \hline
    \end{tabular}
    \caption{admissible values $(t',a,b,c)$ for $t_0=653$}
    \label{table: e}
\end{table}
\begin{remark}
    By choosing $r=1$ and $d=0.606$ one may optimize the estimate in the $\log t$ and $\log\log t$ aspect. This way, the constant $c$ may be taken as small as $0.018$, yielding
    \begin{equation*}
        |S_1(t_2)-S_1(t_1)|\leq3.355+0.160\log\log t_2 +0.018\log t_2
    \end{equation*}
    for $653\leq t_1<t_2$.
\end{remark}
Here is a comparison of the bounds obtained from this method compared to Trudgian's bounds for $|S_1(t_2)-S_1(t_1)|$.
    \begin{table}[h!]
    \centering
    \begin{tabular}{||c c c c c c||}
    \hline
    $t'$ & Table 1 \cite{Trudgian2} & Table \eqref{table: e} & $a$ & $b$ & $c$ \\ [0.5ex] 
    \hline\hline
    $10^{5}$ & $2.629$ & $2.342$ & $1.383$ & $0.216$ & $0.0374$ \\
    $10^{6}$ & $2.800$ & $2.466$ & $1.397$ & $0.210$ & $0.0374$ \\
    $10^{7}$ & $2.959$ & $2.584$ & $1.408$ & $0.206$ & $0.0374$ \\
    $10^{8}$ & $3.110$ & $2.695$ & $1.443$ & $0.202$ & $0.0360$ \\
    $10^{9}$ & $3.255$ & $2.801$ & $1.479$ & $0.200$ & $0.0345$ \\
    $10^{10}$ & $3.395$ & $2.899$ & $1.514$ & $0.197$ & $0.0333$ \\
    $10^{11}$ & $3.526$ & $2.993$ & $1.544$ & $0.195$ & $0.0323$ \\
    $10^{12}$ & $3.649$ & $3.085$ & $1.573$ & $0.194$ & $0.0314$ \\
    $10^{13}$ & $3.770$ & $3.169$ & $1.600$ & $0.192$ & $0.0306$ \\
    $10^{14}$ & $3.887$ & $3.262$ & $1.625$ & $0.191$ & $0.0302$ \\
    $10^{15}$ & $4.002$ & $3.340$ & $1.651$ & $0.190$ & $0.0294$ \\[1ex]  
    \hline
    \end{tabular}
    \label{table: h}
    \end{table}

Lastly, we use Theorem \ref{theorem abc loglog t} to find estimates for $K$ at specific heights $t$. Using the estimates in table \eqref{table: e} 
one may find that one needs $9$ consecutive Gram blocks that satisfy Rosser's rule to apply theorem \ref{theorem abc loglog t} at height $\leq3.6\cdot10^{12}$, at height $\leq3.6\cdot10^{11}$, $8$ such Gram blocks are needed and at heights $\leq3.2\cdot10^{10}$, we require $7$ consecutive Gram blocks to apply the theorem.

\section{Conclusion}
Notice that the $\log t$ component of the bounds which we obtained in lemma \ref{lemma lower bound} are linearly dependent on the coefficient $q_1$ from lemma \ref{key lemma}. This lemma marks an improvement over previous versions of this lemma (see lemma 2.10 in \cite{Trudgian1}) stemming from our consideration of second order poles in addition to simple poles to estimate the integral expression on the right-hand side of equation \eqref{das ist big}. One may try to improve the constant further through consideration of more higher order poles. Furthermore, it should be noted that in order to apply lemma \ref{key lemma} to estimate logarithmic integrals of $L$-functions, we merely require that all non-trivial zeros lie inside the critical strip. Hence, the method is applicable to the study of the distribution of zeros of other $L$-functions.




\vspace{1em}
A documentation of the python code can be found at \url{https://arxiv.org/abs/2512.23064}
\begin{center}
    {\Large {Acknowledgment}}\\
I thank Valentin Blomer for his feedback on the paper and I thank my friend Hendrik Ehrhardt who helped me set up the Python code for the numerical computations. Furthermore, I thank Andrew Fiori for pointing out a mistake in a previous version of this paper.
\end{center}

\printbibliography

  \footnotesize

  \textsc{Mathematisches Institut, Endenicher Allee 60, 53115 Bonn, Germany}\par\nopagebreak
  \textit{E-mail address}: \texttt{victor.amberger@gmx.de}

\end{document}